\newtheorem{theorem}{Theorem}
\newtheorem{proposition}[theorem]{Proposition}
\theoremstyle{definition}
\newtheorem{remark}[theorem]{Remark}
\numberwithin{equation}{section}
\renewcommand{\leq}{\leqslant}
\renewcommand{\geq}{\geqslant}
\begin{document}

\title[Large dilates of hypercube graphs in the plane]{Large dilates of hypercube graphs in the plane}

\author[V. Kova\v{c}]{Vjekoslav Kova\v{c}}
\author[B. Predojevi\'{c}]{Bruno Predojevi\'{c}}

\address{Department of Mathematics, Faculty of Science, University of Zagreb, Bijeni\v{c}ka cesta 30, 10000 Zagreb, Croatia}

\email{vjekovac@math.hr}
\email{bruno.predojevic@math.hr}

\subjclass[]{
Primary
28A75; 
Secondary
42B20} 

\keywords{geometric measure theory, distance graph, density theorem, singular integral, multilinear operator}

\begin{abstract}
We study a distance graph $\Gamma_n$ that is isomorphic to the $1$-skeleton of an $n$-dimensional unit hypercube. We show that every measurable set of positive upper Banach density in the plane contains all sufficiently large dilates of $\Gamma_n$. This provides the first examples of distance graphs other than the trees for which a dimensionally sharp embedding in positive density sets is known.
\end{abstract}

\maketitle


\section{Introduction}
One line of investigation in geometric measure theory was initiated with a question posed by Sz\'{e}kely \cite{Sze83} (and popularized by Erd\H{o}s \cite{Erd83:open}), who asked if a planar set of positive upper density (see Section~\ref{sec:notation} for definitions of the usual densities) realizes all sufficiently large distances between pairs of its points.
This question has been answered affirmatively by Furstenberg, Katznelson, and Weiss \cite{FKW90:dist}, and independently also by Falconer and Marstrand \cite{FM86:dist} and by Bourgain \cite{B86:roth}. 
Each of the three proofs turned out quite influential; for instance the approach of Falconer and Marstrand \cite{FM86:dist} has been generalized to ``very dense'' sets in $\mathbb{R}^d$ \cite{FKY22}. Bourgain's approach \cite{B86:roth} made the greatest impact and it triggered a series of results studying more general rigid configurations in $\mathbb{R}^d$ \cite{LM16:prod,LM19:hypergraphs,K20:anisotrop,DS22}, not necessarily with respect to the Euclidean distance \cite{Kol04,CMP15:roth,DKR18,DK21,DK22}.

Results of this type are often shown in sufficiently large dimensions only, as higher dimensions add ``degrees of freedom'' and make it easier to identify the desired pattern.
The ``slicing argument'' rigorously justifies this reasoning, so the results are sometimes formulated only in the minimal number of dimensions $d$ in which they are known to hold.
However, it is usually still unresolved what this minimal dimension $d$, needed to identify sufficiently large dilates of a given configuration in every set of positive upper density, actually is.

A large class of ``flexible'' configurations was covered by Lyall and Magyar \cite{LM20} in their study of embeddings of the so-called distance graphs which they define as finite connected graphs whose set of vertices is contained in some Euclidean space. Furthermore, they define a distance graph $\Gamma$ to be $k$-degenerate, for some positive integer $k$, if every induced subgraph of $\Gamma$ contains a vertex of degree at most $k$. The smallest such $k$ is known as the \emph{degeneracy} of $\Gamma$. Their main result, \cite[Theorem 2]{LM20}, requires the dimensional threshold $d\geq k+1$ for every $k$-degenerate distance graph they discussed (which they called proper distance graphs). In other words, for a certain subclass of $k$-degenerate distance graphs $\Gamma$ they showed that every set $A\subseteq\mathbb{R}^{k+1}$ of positive upper density contains an isometric copy of the dilate $\lambda\cdot\Gamma$ for all sufficiently large numbers $\lambda>0$.
One can easily check that a distance graph is a tree if and only if it is $1$-degenerate, so it follows from their result that trees can be embedded in the minimal possible dimension $d=2$. 
(An alternative approach to trees can be found in \cite[Section~5]{K20:anisotrop}.) However it is natural to ask if there are other distance graphs, besides trees, for which such a dimensionally optimal embedding is possible. Similarly, one can ask whether there exist distance graphs of arbitrarily large degeneracy whose large dilates can still be embedded in the plane $\mathbb{R}^2$. The motivation for this paper lies in providing examples of such graphs.

We are interested in the distance graph $\Gamma_n$ that is (isomorphic to) a $1$-skeleton of an $n$-dimensional unit hypercube. Notice that $\Gamma_n$ is $n$-degenerate.
Unfolding the more general terminology from \cite{LM20}, one says that a set $A\subseteq\mathbb{R}^2$ \emph{contains an isometric copy of} $\lambda\cdot\Gamma_n$ for some number $\lambda>0$ if there exist a point $x\in\mathbb{R}^2$ and vectors $y_1,\ldots,y_n\in\mathbb{R}^2$ of Euclidean length $\lambda$ such that
\begin{equation}\label{eq:allpoints}
x + r_1 y_1 + \cdots + r_n y_n \in A 
\end{equation}
for all $2^n$ tuples $(r_1,\ldots,r_n)\in\{0,1\}^n$ and that all $2^n$ points in \eqref{eq:allpoints} are mutually distinct; see Figure~\ref{fig:graph1}.
Here is the precise formulation of our first result.

\begin{figure}
\includegraphics[width=0.5\linewidth]{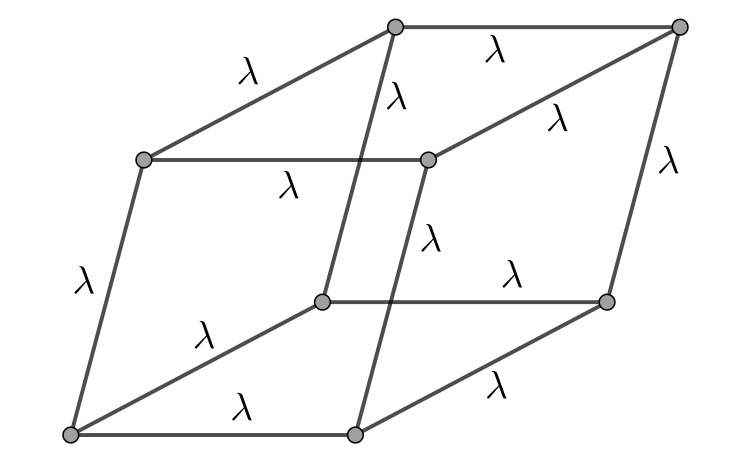}
\caption{Distance graph $\lambda\cdot\Gamma_3$.}
\label{fig:graph1}
\end{figure}

\begin{theorem}\label{thm:large}
For a positive integer $n$ and a measurable set of positive upper Banach density $A\subseteq\mathbb{R}^2$ there exists a number $\lambda_0(A,n)>0$ such that for every number $\lambda\geq\lambda_0(A,n)$ the set $A$ contains an isometric copy of the distance graph $\lambda\cdot\Gamma_n$.
\end{theorem}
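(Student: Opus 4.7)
The plan is to establish Theorem~\ref{thm:large} via a quantitative analysis of a multilinear counting form. Using positive upper Banach density of $A$, for each candidate $\lambda>0$ I pick a cube $Q\subseteq\mathbb{R}^2$ of side $R\gg\lambda$ with $|A\cap Q|\geq\alpha|Q|$ for some $\alpha>0$ depending only on $A$, and set $f=\mathbbm{1}_{A\cap Q}$. Writing $\sigma$ for the normalized arc length on $S^1$, consider the positive $2^n$-linear form
\[
\mathcal{N}_\lambda(f):=\int_{\mathbb{R}^2}\int_{(S^1)^n}\prod_{r\in\{0,1\}^n} f\!\left(x+\lambda\sum_{i=1}^n r_i\omega_i\right)d\sigma(\omega_1)\cdots d\sigma(\omega_n)\,dx.
\]
The set of parameters $(x,\omega_1,\ldots,\omega_n)$ for which two of the $2^n$ vertices in \eqref{eq:allpoints} coincide lies in a proper real-algebraic subvariety of $\mathbb{R}^2\times(S^1)^n$ and is therefore of measure zero, so any strictly positive lower bound on $\mathcal{N}_\lambda(f)$ already produces a nondegenerate isometric copy of $\lambda\cdot\Gamma_n$ inside $A$.

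My approach is a smoothing/rough-part decomposition of $\sigma$ in each variable. Fix a radial Schwartz bump $\psi$ with $\int\psi=1$ and a small parameter $\delta>0$; with $\psi_\delta(\cdot):=\delta^{-2}\psi(\cdot/\delta)$ set $\sigma^\delta:=\sigma\ast\psi_\delta$. Inserting $\sigma=\sigma^\delta+(\sigma-\sigma^\delta)$ in every spherical variable expands $\mathcal{N}_\lambda$ into the telescoping sum $\sum_{S\subseteq[n]}\mathcal{N}_\lambda^S$, where $\sigma^\delta$ occupies positions $i\notin S$ and $\sigma-\sigma^\delta$ the positions $i\in S$. The main term $\mathcal{N}_\lambda^{\varnothing}$, after rescaling $y_i=\lambda\omega_i$, becomes the integral of $\prod_r f(x+\sum r_i y_i)$ against a product of $n$ probability kernels $K_\lambda$, each a smoothed annulus of radius $\lambda$ and thickness $\delta\lambda$. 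Since $\delta\lambda\ll R$, the ball average of $f$ at scale $\delta\lambda$ is close to $\alpha$ on most of $Q$, and a density-plus-Cauchy--Schwarz argument along the edges of the cube bounds $\mathcal{N}_\lambda^{\varnothing}(f)$ below by $c(\alpha,n,\delta)\cdot R^2>0$, uniformly for $R\geq C(\delta)\lambda$.

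What remains is to show that every error term $\mathcal{N}_\lambda^S$ with $|S|\geq1$ is of size $o(R^2)$ as $\lambda\to\infty$, with an implicit constant allowed to depend on $\delta$. Each such term carries at least one factor of $\sigma-\sigma^\delta$, whose Fourier transform $\widehat{\sigma}(\xi)\bigl(1-\widehat{\psi}(\delta\xi)\bigr)$ is essentially supported on $|\xi|\gtrsim\delta^{-1}$ and inherits the stationary-phase decay $|\widehat{\sigma}(\xi)|\lesssim|\xi|^{-1/2}$; rescaling gives $|\widehat{\sigma}(\lambda\xi)|\lesssim(\lambda|\xi|)^{-1/2}$ on $|\xi|\gtrsim(\delta\lambda)^{-1}$. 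Exploiting the combinatorial product structure of $\{0,1\}^n$, I plan to iterate a Gowers-type Cauchy--Schwarz in the spherical variables $\omega_i$ with $i\in S$; each application replaces the current form by one in which half of the $f$-factors appear complex-conjugated, so that after $|S|$ steps the rough kernels can be pushed through Plancherel in the $x$ variable, where the $\widehat{\sigma}$ decay finally produces a negative power of $\lambda$. Picking $\delta=\delta(\alpha,n)$ small enough forces the main term to dominate the sum of errors for every $\lambda\geq\lambda_0(\alpha,n)$, which yields $\mathcal{N}_\lambda(f)>0$ and completes the proof.

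The principal obstacle is precisely this multilinear error estimate. Because $\Gamma_n$ is $n$-degenerate while its $2^n$ planar vertices are pinned by only $2(n+1)$ parameters, $\mathcal{N}_\lambda$ is a highly singular $2^n$-linear form; the saving point is the rigid symmetry of $\{0,1\}^n$, which lets a single Cauchy--Schwarz in one spherical variable replace the full hypercube integral by one of strictly lower dimension, mimicking the inductive scheme that makes trees (the $1$-degenerate case) tractable in the plane in \cite{LM20} and \cite[Section~5]{K20:anisotrop}. Running this induction so that the multilinear constants stay under control across the $2^n-1$ error terms, and so that the $\widehat{\sigma}$ decay is actually captured at every step of the reduction, is where the genuinely new Fourier-analytic input is needed and where one presumably has to prove sharp $L^p$-type bounds for the multilinear singular integrals naturally associated with the edges and faces of the hypercube.
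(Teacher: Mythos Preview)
Your plan has a genuine gap at its core: the error terms $\mathcal{N}_\lambda^S$ with $S\neq\varnothing$ do \emph{not} produce a negative power of $\lambda$. The decay $|\widehat{\sigma}(\lambda\xi)|\lesssim|\lambda\xi|^{-1/2}$ is a function of the product $\lambda\xi$, so after Plancherel the resulting bound is scale-invariant; one obtains $|\mathcal{N}_\lambda^S(f)|\lesssim_n \delta^{1/2}R^2$, uniformly in $\lambda$ (already for $n=1$ the error is $\int|\widehat f(\xi)|^2\widehat\sigma(\lambda\xi)(1-\widehat\psi(\delta\lambda\xi))\,\textup{d}\xi$, whose supremum symbol equals $\sup_\eta|\widehat\sigma(\eta)(1-\widehat\psi(\delta\eta))|\sim\delta^{1/2}$, independent of $\lambda$). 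Thus the whole argument must close at the level of~$\delta$, not of~$\lambda$. But then your lower bound for the main term fails: for small $\delta$ the kernel $K_\lambda=\sigma_\lambda\ast\psi_{\delta\lambda}$ is a thin \emph{annulus} of radius $\lambda$, not a ball, so the sentence ``the ball average of $f$ at scale $\delta\lambda$ is close to $\alpha$'' is irrelevant to $\mathcal{N}_\lambda^{\varnothing}$, and in fact bounding $\mathcal{N}_\lambda^{\varnothing}(f)$ below for small $\delta$ is essentially the original problem (as $\delta\to 0$ it converges to $\mathcal{N}_\lambda(f)$ itself).

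The paper resolves this circularity by a \emph{three}-term split $\mathcal{N}^0_\lambda=\mathcal{N}^1_\lambda+(\mathcal{N}^\varepsilon_\lambda-\mathcal{N}^1_\lambda)+(\mathcal{N}^0_\lambda-\mathcal{N}^\varepsilon_\lambda)$: the structured term is taken at $\varepsilon=1$, where $\sigma\ast\mathbbm{g}$ genuinely dominates a ball indicator and a Gowers-norm argument gives $\mathcal{N}^1_\lambda(\mathbbm{1}_B)\gtrsim(|B|/R^2)^{2^n}R^2$; the uniform part gains $\varepsilon^{1/2}$ as above. The crucial new piece is the intermediate ``error part'' $\mathcal{N}^\varepsilon_\lambda-\mathcal{N}^1_\lambda$, which is \emph{not} small for a single $\lambda$; instead one proves $\sum_{j=1}^J|\mathcal{N}^\varepsilon_{\lambda_j}-\mathcal{N}^1_{\lambda_j}|\lesssim \varepsilon^{-3n}\log(1/\varepsilon)R^2$ over any lacunary sequence $\lambda_1<\cdots<\lambda_J$, via the heat equation and a cubical singular Brascamp--Lieb estimate. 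A pigeonhole in $j$ then selects a good scale, and the theorem is proved by contradiction (a hypothetical sequence of bad $\lambda_j$ would violate this). It is precisely this lacunary-sum/pigeonhole mechanism, absent from your two-term scheme, that lets one decouple the choice of smoothing for the lower bound ($\varepsilon=1$) from the choice needed for oscillatory gain ($\varepsilon$ small).
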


There is some history of variants of Theorem~\ref{thm:large} if one is content with embedding $\lambda\cdot\Gamma_n$ in higher-dimensional Euclidean spaces.

\begin{itemize}
\item Groundbreaking work by Lyall and Magyar \cite{LM16:prod} developed a possible approach to product configurations $\Delta_1\times\Delta_2$ and, in particular, showed that sufficiently large dilates of rigid squares can be found in positive density subsets of $\mathbb{R}^4$; see \cite[Theorem 1.2]{LM16:prod}.
\item Higher-dimensional hypercubes were a bit more difficult and the same result for rigid $n$-hypercubes when $n\geq3$ follows from the studies of more general configurations, done independently by Durcik and Kova\v{c} in $\mathbb{R}^{5n}$ \cite[Theorem 1]{DK21} and by Lyall and Magyar in $\mathbb{R}^{2n}$ \cite[Theorem 1.1(i)]{LM19:hypergraphs}.
\item The most general rigid configuration for which a result like Theorem~\ref{thm:large} is known is due to Lyall and Magyar \cite[Theorem 1.2(i)]{LM19:hypergraphs}; it embeds an $n$-fold product $\Delta_1\times\cdots\times\Delta_n$ of vertex-sets of non-degenerate simplices in positive density subsets of $\mathbb{R}^{\mathop{\textup{card}}(\Delta_1)+\cdots+\mathop{\textup{card}}(\Delta_n)}$.
\item Additional improvements are possible if one gives up the rigidity of the hypercube and starts ``flexing'' its $1$-skeleton. The aforementioned work by Lyall and Magyar \cite{LM20} recognizes $\Gamma_n$ as a proper $n$-degenerate distance graph, so it has already been known prior to this paper that a positive upper Banach density subset of $\mathbb{R}^{n+1}$ contains an isometric copy of $\lambda\cdot\Gamma_n$ at all sufficiently large scales $\lambda$ \cite[Theorem 2(i)]{LM20}.
\item Theorem~\ref{thm:large} above embeds $\lambda\cdot\Gamma_n$ in $\mathbb{R}^2$, which is clearly dimensionally optimal. Already Sz\'{e}kely's question (which is the case $n=1$) obviously has a negative answer in $\mathbb{R}$: just consider $A=[-1/10,1/10]+\mathbb{Z}$.
\end{itemize}

There has also been interest in ``compact versions'' (in the language of Bourgain \cite{B86:roth}) of Euclidean density theorems. The following weaker but quantitative variant of Theorem~\ref{thm:large} will follow the same line of proof.
Let $\vert A\vert $ denote the Lebesgue measure of $A\subseteq\mathbb{R}^2$.

\begin{theorem}\label{thm:interval}
For a positive integer $n$ there exists a positive constant $C(n)$ with the following property: for every $0<\delta\leq1/2$ and every measurable set $A\subseteq[0,1]^2$ satisfying $\vert A\vert \geq\delta$ there is an interval $I(A,n)\subseteq(0,1]$ of length at least $\exp(-\delta^{-C(n)})$ such that for every $\lambda\in I(A,n)$ the set $A$ contains an isometric copy of the distance graph $\lambda\cdot\Gamma_n$.
\end{theorem}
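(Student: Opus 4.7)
The plan is to reuse the machinery developed for Theorem~\ref{thm:large} while tracking every quantitative dependence on $\delta$. Write $\sigma$ for the normalized arclength measure on $S^1\subseteq\mathbb{R}^2$ and introduce the multilinear counting functional
\begin{equation*}
N_\lambda(A)\;:=\;\int_{\mathbb{R}^2}\int_{(S^1)^n}\prod_{r\in\{0,1\}^n}\mathbf{1}_A\bigl(x+\lambda r_1\omega_1+\cdots+\lambda r_n\omega_n\bigr)\,d\sigma(\omega_1)\cdots d\sigma(\omega_n)\,dx.
\end{equation*}
Since the tuples producing a degenerate copy of $\lambda\cdot\Gamma_n$ form a $\sigma^{\otimes n}\otimes dx$-null set, a strict inequality $N_\lambda(A)>0$ already delivers the desired isometric copy, so it suffices to produce a subinterval of $(0,1]$ of length $\exp(-\delta^{-C(n)})$ on which $N_\lambda(A)$ stays bounded below.

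First I would fix a frequency cutoff $\kappa\geq 1$, to be optimized at the end, and decompose each slot as $\sigma=\sigma^\sharp+\sigma^\flat$, with $\sigma^\sharp$ obtained by truncating $\widehat{\sigma}$ smoothly at scale $\kappa$. Expanding $\prod_{r}d\sigma$ into $2^n$ terms, the leading piece $N_\lambda^\sharp(A)$ with $\sigma^\sharp$ in every slot has non-negative smooth kernels; a coordinate-by-coordinate Cauchy--Schwarz (in the style of a Gowers box-norm or a Loomis--Whitney inequality) produces the main-term bound $N_\lambda^\sharp(A)\geq c_n\,\delta^{2^n}$. Each of the remaining $2^n-1$ pieces contains at least one rough factor $\sigma^\flat$, and the key analytic input, parallel to the one used for Theorem~\ref{thm:large}, is the multilinear singular-integral estimate
\begin{equation*}
\int_{0}^{1}\bigl|N_\lambda(A)-N_\lambda^\sharp(A)\bigr|\,d\lambda\;\leq\;C_n\,\kappa^{-\alpha_n}
\end{equation*}
for some $\alpha_n>0$; this follows from $|\widehat{\sigma}(\xi)|\lesssim(1+|\xi|)^{-1/2}$, iterated Plancherel in the remaining slots, and the hypercube structure of $\{0,1\}^n$. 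Choosing $\kappa=\exp(\delta^{-C_0(n)})$ for a sufficiently large $C_0(n)$ and pigeonholing over $(0,1]$, combined with a polynomial-in-$\kappa$ Lipschitz estimate on $\lambda\mapsto N_\lambda^\sharp(A)$ and a scale-localized maximal variant of the error bound, I expect to recover a subinterval of length $\exp(-\delta^{-C(n)})$ on which $N_\lambda^\sharp(A)\geq\tfrac{c_n}{2}\delta^{2^n}$ and $|N_\lambda(A)-N_\lambda^\sharp(A)|\leq\tfrac{c_n}{2}\delta^{2^n}$ hold simultaneously, whence $N_\lambda(A)>0$ throughout.

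The main obstacle is the multilinear singular-integral estimate in dimension $2$. Since $\Gamma_n$ is $n$-degenerate, the naive strategy---one Cauchy--Schwarz per vertex followed by elementary Fourier decay---closes only in dimension $n+1$, and in the plane the $|\xi|^{-1/2}$ decay of a single circle measure is by itself insufficient to control all $2^n-1$ error pieces. One has to exploit the cube structure of $\{0,1\}^n$ by cascading Cauchy--Schwarz through matched pairs of opposite vertices along each coordinate and combining the circular decays across the $n$ independent directions. This is the essential novelty already needed for Theorem~\ref{thm:large}, and once it is in hand the quantitative bookkeeping transfers directly to yield the exponentially small but effective interval length required by Theorem~\ref{thm:interval}.
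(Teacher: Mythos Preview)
There is a genuine gap in your main-term lower bound. You claim $N_\lambda^\sharp(A)\geq c_n\delta^{2^n}$ via a Gowers--box Cauchy--Schwarz argument with the smoothing at the fine scale $1/\kappa$, where later you take $\kappa=\exp(\delta^{-C_0(n)})$. But that lower-bound argument (compare the paper's structured estimate \eqref{eq:structured}) rests on a pointwise domination of the type $\sigma\ast\mathbbm{g}\gtrsim\mathbbm{1}_{[-1,1]^2}$, which is only available when the mollifier is at scale comparable to the radius, i.e.\ at smoothing parameter $\varepsilon\sim 1$. For large $\kappa$ your kernel $\sigma^\sharp$ is concentrated on an annulus of width $O(1/\kappa)$ and dominates no bump on a ball of fixed radius, so the reduction to a Gowers norm collapses; indeed $N_\lambda^\sharp$ with $\kappa$ large is essentially $N_\lambda$ itself, and the claimed inequality would already be the theorem.

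The paper's fix is a \emph{three}-term split $\mathcal{N}^0_\lambda=\mathcal{N}^1_\lambda+(\mathcal{N}^\varepsilon_\lambda-\mathcal{N}^1_\lambda)+(\mathcal{N}^0_\lambda-\mathcal{N}^\varepsilon_\lambda)$. The structured part $\mathcal{N}^1_\lambda$ is at the \emph{coarse} scale $\varepsilon=1$, so the Gowers lower bound \eqref{eq:GCScorr} applies and gives $\mathcal{N}^1_\lambda(\mathbbm{1}_A)\gtrsim\delta^{2^n}$. The uniform part $|\mathcal{N}^0_\lambda-\mathcal{N}^\varepsilon_\lambda|\lesssim\varepsilon^{1/2}$ corresponds to your decaying error. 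The middle ``error'' part $\mathcal{N}^\varepsilon_\lambda-\mathcal{N}^1_\lambda$ is what your two-term scheme hides: it does \emph{not} decay in $\varepsilon$---the available bound in fact blows up like $\varepsilon^{-3n}\log(1/\varepsilon)$---but the singular Brascamp--Lieb estimate \eqref{eq:error} shows that its sum over any $J$ lacunary scales is bounded \emph{independently of $J$}. One then covers $(0,1]$ by $J\sim\delta^{-O_n(1)}$ dyadic intervals and argues by contradiction: if each interval contained a bad $\lambda_j$, pigeonholing in \eqref{eq:error} would still force $\mathcal{N}^0_{\lambda_j}(\mathbbm{1}_A)>0$ for some $j$. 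Hence some entire interval of length $\geq 2^{-2J}=\exp(-\delta^{-C(n)})$ works. The singular-integral input thus enters not as a $\kappa^{-\alpha_n}$ gain but as this $\ell^1$-over-lacunary-scales bound with polynomial \emph{loss} in $1/\varepsilon$; your Lipschitz/maximal scheme does not substitute for it.
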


Weaker variants of Theorem~\ref{thm:interval} in higher dimensions can also be deduced from the existing literature in numerous ways.
\begin{itemize}
\item Lyall and Magyar \cite[Theorem 2(ii)]{LM20} showed a general quantitative result of this type for proper distance graphs, but there $\lambda\cdot\Gamma_n$ needs to be embedded in $[0,1]^{n+1}$.
\item For completely rigid hypercubes of side-length $\lambda$, Lyall and Magyar \cite[Theorem 1.1(ii)]{LM19:hypergraphs} showed an analogous result in $[0,1]^{2n}$, but with a tower-exponential bound on the length of the interval $I(A,n)$. Then they proceeded to study multiple products of vertex-sets of simplices $\Delta_1\times\cdots\times\Delta_n$ \cite[Theorem 1.2(ii)]{LM19:hypergraphs}.
\item Durcik and Kova\v{c} \cite[Theorem 3]{DK22} sharpened the last result for hypercubes in $[0,1]^{2n}$ to the same type of bound for the length of $I(A,n)$ as in Theorem~\ref{thm:interval}, namely a single exponential in a negative power of $\delta$.
\item Durcik and Stip\v{c}i\'{c} \cite[Theorem 1]{DS22} extended the aforementioned quantitatively reasonable result from \cite{DK22} to multiple products of vertex-sets of simplices, simultaneously generalizing \cite[Theorem 1.2(ii)]{LM19:hypergraphs} and \cite[Theorem 3]{DK22}.
\item Theorem~\ref{thm:interval} is also clearly dimensionally optimal for embeddings of $\Gamma_n$: in $[0,1]$ one can simply consider $A=[0,\varepsilon]\cup[3\varepsilon,4\varepsilon]\cup[6\varepsilon,7\varepsilon]\cup\cdots$, which has measure about $1/3$, while $\varepsilon>0$ can be arbitrarily small.
\end{itemize}

\begin{figure}
\includegraphics[width=0.43\linewidth]{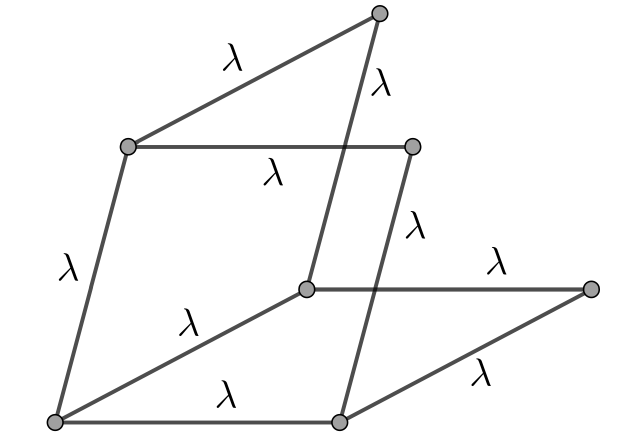}
\caption{A subgraph of $\lambda\cdot\Gamma_3$.}
\label{fig:graph2}
\end{figure}

Certain subgraphs of $\Gamma_n$ (or their minor modifications) have appeared in the work of Fitzpatrick, Iosevich, McDonald, and Wyman \cite[Section~4]{FIMW21} (see the collection of adjoined rhombi in Figure~\ref{fig:graph2}) albeit in the context of configurations in two-dimensional vector spaces over finite fields. This fact provided us with another source of motivation for formulating Theorems~\ref{thm:large} and \ref{thm:interval}.

\begin{remark}
All of the aforementioned results in the literature relevant to embeddings of large dilates of $\Gamma_n$ also apply to the $1$-skeleton of a rectangular box with edge lengths $c_1,\ldots,c_n\in(0,\infty)$, viewed as a distance graph.
This is also the case with our Theorems~\ref{thm:large} and \ref{thm:interval}, which we formulated in the case $c_1=\cdots=c_n=1$ only, not to overwhelm the notation. For instance, purely cosmetic changes in the proof of the first theorem can also show that the $1$-skeleton of a rectangular box with edge lengths $a_j=\lambda c_j$, $j=1,\ldots,n$, can be embedded in a positive upper Banach density set $A\subseteq\mathbb{R}^2$ for all sufficiently large numbers $\lambda$ depending on $A$; see Figure~\ref{fig:graph3}.
In the other direction, each prescribed edge length of the distance graph that we are embedding clearly needs to be sufficiently large, as the case $a_1=1$ (or, in fact, any other fixed number) is easily prohibited by constructing a set $A$ of positive upper density that avoids unit distances between its points.
Quite interestingly, a certain coloring constructed in \cite{Kov23color} to answer a seemingly unrelated question of Erd\H{o}s and Graham \cite[\#189]{EP} (also see \cite[p.~331]{EG79} and \cite[p.~56]{GB15}) can be modified to show that there exists a positive upper density planar set $A$ in which we cannot embed any $1$-skeleton of a rectangular box with sides $a_j$ satisfying 
\begin{equation}\label{eq:productrelation}
a_1 a_2 \cdots a_n=1; 
\end{equation}
the details are given in \cite{Kov23color}.
This difficulty disappears in higher dimensions.
The fact that even a single non-compact relation between edge lengths, like \eqref{eq:productrelation}, is problematic for embeddings in $\mathbb{R}^2$ adds to the subtleties of Theorems~\ref{thm:large} and \ref{thm:interval}.
\end{remark}

\begin{figure}
\includegraphics[width=0.5\linewidth]{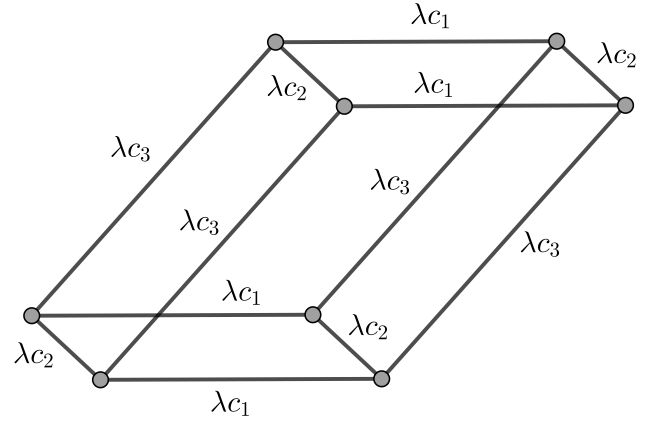}
\caption{A variant of $\lambda\cdot\Gamma_3$.}
\label{fig:graph3}
\end{figure}

A crucial ingredient in the proofs of Theorems~\ref{thm:large} and \ref{thm:interval} is a bound for the multilinear singular integral forms associated with the hypercube graphs. 
We will include a self-contained treatment of those analytical objects to the extent needed for the geometric application in this paper, but let us also briefly comment on them in higher generality. 
The singular integral form that will appear in Section~\ref{sec:error} (see \eqref{eq:theform}) is a particular case of a \emph{singular Brascamp--Lieb form}, namely 
\begin{equation}\label{eq:BLgeneral}
\Lambda(f_1,\ldots,f_m) = \mathop{\textup{p.v.}}\int_{\mathbb{R}^D} \Big( \prod_{j=1}^{m} f_j (\Pi_j\mathbf{x}) \Big) K(\Pi\mathbf{x}) \,\textup{d}\mathbf{x}, 
\end{equation}
where $\Pi_1,\ldots,\Pi_m,\Pi$ are surjective linear maps from $\mathbb{R}^D$ to lower-dimensional Euclidean spaces, while $K$ is a smooth Calder\'{o}n-Zygmund kernel.
The term \emph{singular Brascamp--Lieb inequality} was first used by Durcik and Thiele \cite{DT20} for any $\textup{L}^p$ estimate for the form \eqref{eq:BLgeneral}. These inequalities are significantly more difficult than their non-singular counterparts (with kernel $K$ omitted) and the literature is very far from their complete theory; see the survey paper \cite{DT21survey} by Durcik and Thiele. No estimates are known already for seemingly simple instances, such as the so-called triangular Hilbert transform \cite{KTZ15}.
However, the particular ``cubical'' case, when there are $2^n$ maps $\Pi_j$ and they are projections of the form
\[ \mathbb{R}^{2n}\to\mathbb{R}^n,\quad (x_1^0, \ldots, x_n^0, x_1^1,\ldots, x_n^1) \mapsto (x_1^{r_1}, \ldots, x_n^{r_n}) \]
for $(r_1,\ldots,r_n)\in\{0,1\}^n$, has seen much progress over the last ten years \cite{Kov12,Kov11,Dur15,Dur17,DT20} (even before they were called the singular Brascamp--Lieb forms). The current state-of-the-art paper is the one by Durcik, Slav\'{\i}kov\'{a}, and Thiele \cite{DST22}.
Particular instances of the singular cubical forms have already found applications in geometric measure theory \cite{DKR18,DK21,DK22,DS22}, probability \cite{KS15,KS20}, and ergodic theory \cite{Kov16,DKST19}.
The form \eqref{eq:theform} needed in Section~\ref{sec:error} is not of the cubical type, but in the case $d=1$ it can be reduced to the studied cubical form by composing the functions $f_j$ with certain skew-projections, while the cases $d\geq2$ would be quite analogous.
Thus, we could have essentially just invoked the result of \cite{DST22} in the second half of Section~\ref{sec:error}. However, in order to make the proofs of Theorems~\ref{thm:large} and \ref{thm:interval} more elementary, we prefer to give a short self-contained proof of the particular estimate that we need, namely, \eqref{eq:singspec} below.

Proofs of Theorems~\ref{thm:large} and \ref{thm:interval} will respectively follow the approaches from \cite[Section~4]{K20:anisotrop} and \cite[Section~7]{DK22}. 
However, some technicalities need to be done differently for a flexible configuration and in the plane. The most notable differences are the appearance of the Gowers' uniformity norms $\Vert \cdot\Vert_{\textup{U}^n}$ in the study of the so-called structured part of the counting form in Section~\ref{sec:structured} (and not when studying the so-called uniform part in Section~\ref{sec:uniform}), and the appearance of the aforementioned hypercube graph singular integral forms in Section~\ref{sec:error}.

Density theorems in geometric measure theory have gained a lot of attention in the literature over the last ten years. The reader can consult \cite{Kovac:survey} for a very brief survey of the very recent developments.


\section{Notation, basic definitions, and counting forms}
\label{sec:notation}
We will write $A\lesssim_P B$ and $B\gtrsim_P A$ if the inequality $A\leq C_P B$ holds for some unimportant finite constant $C_P$ depending only on a set of parameters $P$. We also write $A \sim_P B$ if both $A\lesssim_P B$ and {  $B\lesssim_P A$} hold.
Sometimes it is understood that the constant $C_P$ also depends on a few paramaters that are not explicitly listed in the set $P$, and which will be fixed throughout the proof. In our case this will be the pattern dimension $n$.

We will write $\mathbbm{1}_{S}$ for the indicator function (i.e., the characteristic function) of a set $S\subseteq\mathbb{R}^2$.
The imaginary unit will be denoted by $\mathbbm{i}$. The Euclidean norm of a vector $v$ will be written as $\vert v\vert $, while the standard inner product of $v$ and $w$ will be denoted $v\cdot w$.
We will write $\textup{B}_r(x)$ for the Euclidean ball of radius $r>0$ centered at $x\in\mathbb{R}^2$.

The \emph{upper Banach density} of a measurable set $A$ in the Euclidean plane $\mathbb{R}^2$ is defined to be
\[ \overline{\delta}(A)
:= \limsup_{R\to\infty} \sup_{x\in\mathbb{R}^2} \frac{\vert A\cap(x+[0,R]^2)\vert }{R^2}, \]
where $\vert B\vert $ denotes the two-dimensional Lebesgue measure of $B$.
It is clearly greater than or equal to the more common (\emph{centered}) \emph{upper density} of a measurable $A\subseteq\mathbb{R}^2$, given by
\[ \limsup_{R\to\infty} \frac{\vert A\cap[-R/2,R/2]^2\vert }{R^2}. \]
Note that $\overline{\delta}(A)$ is a more flexible quantity, in the sense that it also detects accumulated ``mass'' on arbitrary translates of large squares $[-R/2,R/2]^2$.

Fix an integer $n\geq1$. Let $\mathcal{B}(\mathbb{R}^D)$ denote the Borel sigma algebra on $\mathbb{R}^D$.
We write $\sigma$ for the normalized spherical measure on $\mathbb{S}^{1}\subset\mathbb{R}^2$. 

Let $\mathbbm{g}$ be the \emph{standard Gaussian} on $\mathbb{R}^2$, $\mathbbm{h}^{(i)}$ its partial derivatives, and $\mathbbm{k}$ its Laplacian, i.e.,
\[ \mathbbm{g}(x) := e^{-\pi\vert x\vert ^2},\quad
\mathbbm{h}^{(i)} := \partial_i \mathbbm{g}, \ i=1,2,\quad
\mathbbm{k} := \Delta \mathbbm{g}. \]
Using the usual normalization for the Fourier transform,
\[ \widehat{f}(\xi) := \int_{\mathbb{R}^2} f(x) e^{-2\pi \mathbbm{i} x\cdot\xi} \,\textup{d}x, \]
we easily get
\begin{equation}\label{eq:FtofGauss}
\widehat{\mathbbm{g}}(\xi) = e^{-\pi\vert \xi\vert ^2},
\quad \widehat{\mathbbm{h}^{(i)}}(\xi) = 2\pi \mathbbm{i}\xi_i e^{-\pi\vert \xi\vert ^2},
\quad \widehat{\mathbbm{k}}(\xi) = -4\pi^2 \vert \xi\vert ^2 e^{-\pi\vert \xi\vert ^2}.
\end{equation}
In general, we denote by $\tau_\lambda$ and $f_\lambda$ dilates of a measure $\tau$ on $\mathcal{B}(\mathbb{R}^D)$ and an integrable function $f$ on $\mathbb{R}^D$ by a factor $\lambda>0$, respectively defined as
\[ \tau_{\lambda}(E) := \tau\Big(\frac{1}{\lambda}E\Big), \quad { f_{\lambda}}(x) := \frac{1}{\lambda^D} f\Big(\frac{1}{\lambda}x\Big). \]
Their convolution is, on the other hand, defined as a function $\tau\ast f$ on $\mathbb{R}^D$ given by
\[ (\tau\ast f)(x) := \int_{\mathbb{R}^D} f(x-y) \,\textup{d}\tau(y). \]
Using \eqref{eq:FtofGauss} and the basic identity for the Fourier transform of a convolution, we easily obtain
\begin{align}
\sum_{l=1}^2 \mathbbm{h}^{(l)}_{\alpha} \ast \mathbbm{h}^{(l)}_{\beta} &=\frac{\alpha \beta}{\alpha^2 + \beta ^2} \mathbbm{k}_{\sqrt{\alpha^2 + \beta^2}} , \label{conv par der Gauss} \\
\mathbbm{k}_{\alpha} \ast \mathbbm{g}_{\beta} &=\frac{\alpha^2}{\alpha^2 + \beta^2} \mathbbm{k}_{\sqrt{\alpha^2 + \beta^2}} \label{conv Gauss Laplace}
\end{align}
for { every} $\alpha, \beta >0$.
Dilates of $\mathbbm{g}$ and $\mathbbm{k}$ satisfy the (re-parameterized) \emph{heat equation},
\begin{equation}\label{eq:heateq}
\frac{\partial}{\partial t} \mathbbm{g}_t(x) = \frac{1}{2\pi t} \mathbbm{k}_t(x)
\end{equation}
for $(t,x)\in(0,\infty)\times\mathbb{R}^2$, which is easily seen by straightforward differentiation.

Let $f\colon\mathbb{R}^2\to[0,1]$ be a compactly supported measurable function.
It is convenient to denote
\[ \mathcal{F}_n(x;y_1,\ldots,y_n) := \prod_{(r_1,\ldots,r_n)\in\{0,1\}^n} f(x + r_1 y_1 + \cdots + r_n y_n) \]
for $x,y_1,\ldots,y_n\in\mathbb{R}^2$.
For instance, in the case $n=2$ we have
\[ \mathcal{F}_2(x;y_1,y_2) = f(x) f(x+y_1) f(x+y_2) f(x+y_1+y_2). \]
Throughout this paper we will always use this notation in the special case when $f$ is equal to an indicator function of some set. These expressions are well-known from additive combinatorics, because their integrals are powers of the \emph{Gowers uniformity norms} $\Vert \cdot\Vert_{\textup{U}^n}$, namely,
\begin{equation}\label{eq:Gowersdef}
\int_{(\mathbb{R}^2)^{n+1}} \mathcal{F}_n(x;y_1,\ldots,y_n) \,\textup{d}x \,\textup{d}y_1 \cdots \textup{d}y_n = \Vert f\Vert_{\textup{U}^n(\mathbb{R}^2)}^{2^n}.
\end{equation}
Note that $\mathcal{F}_n(x;y_1,\ldots,y_n)$ is symmetric in $y_1,\ldots,y_n$ and that it satisfies the recurrence relation:
\begin{equation}\label{eq:rekurzija}
\mathcal{F}_n(x;y_1,\ldots,y_n) = \mathcal{F}_{n-1}(x;y_1,\ldots,y_{n-1}) \mathcal{F}_{n-1}(x+y_n;y_1,\ldots,y_{n-1}).
\end{equation}
The well-known \emph{Gowers--Cauchy--Schwarz inequality} \cite{Gow01,HK05,ET12} on the group $\mathbb{R}^2$ reads
\begin{align*}
\bigg\vert  \int_{(\mathbb{R}^2)^{n+1}} \prod_{(r_1,\ldots,r_n)\in\{0,1\}^n} f_{r_1,\ldots,r_n}(x + r_1 y_1 + \cdots + r_n y_n) \,\textup{d}x \,\textup{d}y_1 \cdots \textup{d}y_n \bigg\vert  & \\
\leq \prod_{(r_1,\ldots,r_n)\in\{0,1\}^n} \Vert f_{r_1,\ldots,r_n}\Vert_{\textup{U}^n(\mathbb{R}^2)} & .
\end{align*}
for bounded, compactly supported, measurable, { real-valued} functions $f_{r_1,\ldots,r_n}$.
It is merely a consequence of many applications of the Cauchy--Schwarz inequality.
Moreover, if $f$ is nonnegative, bounded, measurable, and supported on a cube $Q\subset\mathbb{R}^d$, then we have
\begin{equation}\label{eq:GCScorr}
\Vert f\Vert_{\textup{U}^n(\mathbb{R}^2)} \gtrsim_{n} \vert Q\vert ^{-1+(n+1)/2^n} \int_Q f .
\end{equation}
To see this, we simply apply the Gowers--Cauchy--Schwarz inequality when $f_{0,\ldots,0}=f$ and all other $f_{r_1,\ldots,r_n}$ are equal to the characteristic function of the cube $\widetilde{Q}$ obtained by dilating $Q$ from its center by the factor of $n+1$. Namely, let $Q_0$ be the cube congruent to $Q$, but translated so that its center coincides with the origin.
If $x\in Q$ and $y_1,\ldots,y_n\in Q_0$, then $x + r_1 y_1 + \cdots + r_n y_n\in\widetilde{Q}$ for any choice of $r_1,\ldots,r_n\in\{0,1\}$.

Let us finally define an appropriate \emph{counting form} by
\[ \mathcal{N}^{0}_{\lambda}(f) :=
\int_{\mathbb{R}^{2}} \int_{(\mathbb{R}^2)^n} \mathcal{F}_n(x;y_1,\ldots,y_n) \,\textup{d}\sigma_{\lambda}^{\otimes n}(y_1,\ldots,y_n) \,\textup{d}x \]
for every $\lambda>0$.
Moreover, we will also need the smoothed out version of $\mathcal{N}_{\lambda}^{0}$, defined as
\[ \mathcal{N}^{\varepsilon}_{\lambda}(f) :=
\int_{(\mathbb{R}^2)^{n+1}} \mathcal{F}_n(x;y_1,\ldots,y_n) \prod_{k=1}^n (\sigma_{\lambda} \ast \mathbbm{g}_{\varepsilon \lambda})(y_k) \,\textup{d}y_1 \cdots \textup{d}y_n \,\textup{d}x \]
for $\lambda>0$ and $0<\varepsilon\leq1$.
Note that we have
\begin{equation}\label{eq:Nconvergence}
\lim_{\varepsilon\to0+} \mathcal{N}_{\lambda}^{\varepsilon}(f) = \mathcal{N}_{\lambda}^{0}(f).
\end{equation}
This can be verified by first observing that
\[ \mathcal{G}(y_1,\ldots,y_n) := \int_{\mathbb{R}^2} \mathcal{F}_n(x;y_1,\ldots,y_n) \,\textup{d}x \]
is a continuous function on $(\mathbb{R}^2)^n$, merely by the continuity of translation operators on $\textup{L}^1(\mathbb{R}^2)$. Then \eqref{eq:Nconvergence} follows simply by rewriting
\begin{align*}
\mathcal{N}^{\varepsilon}_{\lambda}(f) & = \int_{(\mathbb{R}^2)^n} (\mathcal{G}\ast\mathbbm{g}^{\otimes n}_{\varepsilon\lambda})(y_1,\ldots,y_n)\,\textup{d}\sigma_{\lambda}^{\otimes n}(y_1,\ldots,y_n), \\ \mathcal{N}^{0}_{\lambda}(f) & = \int_{(\mathbb{R}^2)^n} \mathcal{G}(y_1,\ldots,y_n)\,\textup{d}\sigma_{\lambda}^{\otimes n}(y_1,\ldots,y_n) 
\end{align*}
and using the pointwise limit $\lim_{\varepsilon\to0+}\mathcal{G}\ast\mathbbm{g}^{\otimes n}_{\varepsilon\lambda}=\mathcal{G}$ and the dominated convergence theorem with respect to the measure $\textup{d}\sigma_{\lambda}^{\otimes n}(y_1,\ldots,y_n)$.

Using the scheme of approach inaugurated by Cook, Magyar, and Pramanik \cite{CMP15:roth}, the main idea is to decompose
\begin{equation}\label{eq:decomposition}
\mathcal{N}^{0}_{\lambda}(f) = \mathcal{N}^{1}_{\lambda}(f) + \big(\mathcal{N}^{\varepsilon}_{\lambda}(f) - \mathcal{N}^{1}_{\lambda}(f)\big) + \big(\mathcal{N}^{0}_{\lambda}(f) - \mathcal{N}^{\varepsilon}_{\lambda}(f)\big)
\end{equation}
for an appropriately chosen $\varepsilon\in(0,1]$.
The three summands on the right hand side of \eqref{eq:decomposition} are respectively called the \emph{structured part}, the \emph{error part}, and the \emph{uniform part}; see the ``philosophical'' discussion in \cite{K20:anisotrop}.
Proofs of Theorems~\ref{thm:large} and \ref{thm:interval} proceed by controlling the structured part from below and bounding the other two parts from above, modulo the fact that pigeonholing in the scales $\lambda$ is used in the discussion of the error part.


\begin{remark}\label{rem:nondegenerate}
It is not immediately clear that the positivity of the counting form $\mathcal{N}_{\lambda}^{0}(f)$ implies the existence of a nondegenerate hypercube configuration. In principle, some of the vertices may overlap. However, such degenerate cases are negligible. To see this, let $S,T\subseteq \{1, \ldots, n\}$, $S\neq T$, for some natural number $n$. Define
\[ H_{S,T} := \bigg\{(y_1, \ldots , y_n) \in (\mathbb{R}^2)^n \, : \, \sum_{i\in S}y_i = \sum_{i\in T}y_i \bigg\}. \]
Finally set
\[ H=\bigcup_{\substack{S,T\subseteq \{1, \ldots, n\}\\S\neq T}} H_{S,T}. \]
For arbitrary fixed $S,T$ as above consider the \emph{degenerate counting form}
\begin{align*}
\mathcal{N}^{0,S,T}_{\lambda}(f) :&=
\int_{\mathbb{R}^{2}} \int_{H_{S,T}} \mathcal{F}_n(x;y_1,\ldots,y_n) \,\textup{d}\sigma_{\lambda}^{\otimes n}(y_1,\ldots,y_n) \,\textup{d}x.
\end{align*}
There exist $1\leq j\leq n$ and $s_1,s_2,\ldots,s_{j-1}\in\{-1,0,1\}$ (all depending on $S,T$) such that the hyperplane $H_{S,T}$ can be written as 
\[ H_{S,T} = \{(y_1, \ldots , y_n) \in (\mathbb{R}^2)^n \, : \, y_j = s_1 y_1 + s_2 y_2 + \cdots + s_{j-1} y_{j-1}\}, \]
so, consequently, 
\begin{align*}
\mathcal{N}^{0,S,T}_{\lambda}(f) 
= \int_{\mathbb{R}^{2}} \int_{(\mathbb{R}^2)^{j-1}} \int_{\{s_1 y_1 + s_2 y_2 + \cdots + s_{j-1} y_{j-1}\}} \int_{(\mathbb{R}^2)^{n-j}} \mathcal{F}_n(x;y_1,\ldots,y_n) & \\
\textup{d}\sigma_{\lambda}^{\otimes (n-j)}(y_{j+1},\ldots,y_n) \,\textup{d}\sigma_{\lambda}(y_j) \,\textup{d}\sigma_{\lambda}^{\otimes (j-1)}(y_1,\ldots,y_{j-1}) \,\textup{d}x & = 0,
\end{align*}
since the integral over the singleton $\{s_1 y_1 + s_2 y_2 + \cdots + s_{j-1} y_{j-1}\}$ with respect to the measure $\sigma_{\lambda}$ certainly equals $0$. Therefore, 
\begin{align*}
\int_{\mathbb{R}^{2}} \int_{(\mathbb{R}^2)^n\setminus H} \mathcal{F}_n(x;y_1,\ldots,y_n) \,\textup{d}\sigma_{\lambda}^{\otimes n}(y_1,\ldots,y_n) \,\textup{d}x & \\
\geq \mathcal{N}^{0}_{\lambda}(f) - \sum_{\substack{S,T\subseteq \{1, \ldots, n\}\\S\neq T}}\mathcal{N}^{0,S,T}_{\lambda}(f) = \mathcal{N}^{0}_{\lambda}(f) & .
\end{align*}
Once we show that $\mathcal{N}^{0}_{\lambda}(\mathbbm{1}_A)>0$, it will follow that there exist $x\in\mathbb{R}^2$ and $(y_1,\ldots,y_n)\in(\mathbb{R}^2)^n\setminus H$ such that $\mathcal{F}_n(x;y_1,\ldots,y_n)>0$, which means that all $2^n$ points \eqref{eq:allpoints} really belong to $A$ and are mutually distinct.
We conclude that $\mathcal{N}^{0}_{\lambda}(\mathbbm{1}_A)>0$ is sufficient to guarantee that $A$ contains an isometric copy of $\lambda\cdot\Gamma_n$.
\end{remark}

Now we are ready to begin the proofs of Theorems \ref{thm:large} and \ref{thm:interval}, which occupy the rest of the paper.
As we have already mentioned, the strategy is to control separately each of the three terms from the decomposition \eqref{eq:decomposition}.


\section{The structured part}
\label{sec:structured}

\begin{proposition}
For numbers $R\geq\lambda>0$ and a measurable set $B\subseteq[0,R]^2$ we have
\begin{equation}\label{eq:structured}
\mathcal{N}^{1}_{\lambda}(\mathbbm{1}_B) \geq c_{\textup{str}} \Big(\frac{\vert B\vert }{R^2}\Big)^{2^n} R^{2},
\end{equation}
where $c_{\textup{str}}\in(0,\infty)$ is a constant that depends only on $n$.
\end{proposition}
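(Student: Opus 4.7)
The plan is to exploit a pointwise lower bound for the smoothing kernel $\phi_{\lambda}:=\sigma_{\lambda}\ast\mathbbm{g}_{\lambda}$ in order to reduce the structured part to a Gowers-norm lower bound, which then follows from the corollary \eqref{eq:GCScorr} applied cube-by-cube and aggregated via convexity. The first step is the pointwise estimate $\phi_{\lambda}(y)\geq c\lambda^{-2}$ for every $y\in\textup{B}_{\lambda/2}(0)$, with some absolute constant $c>0$. Writing $\mathbbm{g}_{\lambda}(x)=\lambda^{-2}e^{-\pi|x|^2/\lambda^2}$ and noting that $|y-z|\leq 3\lambda/2$ whenever $|y|\leq\lambda/2$ and $|z|=\lambda$, the Gaussian factor is uniformly bounded below on the support of the probability measure $\sigma_{\lambda}$. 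Substituting into the definition of $\mathcal{N}^{1}_{\lambda}$ and using nonnegativity of the integrand, this yields
\[ \mathcal{N}^{1}_{\lambda}(\mathbbm{1}_B) \geq c^n\,\lambda^{-2n} \int_{\mathbb{R}^2}\int_{\textup{B}_{\lambda/2}(0)^n} \mathcal{F}_n(x;y_1,\ldots,y_n)\,\textup{d}y_1\cdots\textup{d}y_n\,\textup{d}x. \]

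The second step is to localize by partitioning $[0,R]^2$ into axis-parallel subsquares $Q_1,\ldots,Q_N$ of common side-length $\lambda':=\lambda/(2\sqrt{2})$, which is permitted because $\lambda\leq R$; here $N\sim R^2/\lambda^2$. For each fixed $i$, the instance of $\mathcal{F}_n$ associated with the function $\mathbbm{1}_{B\cap Q_i}$ vanishes unless $x\in Q_i$ (obtained by setting $r_1=\cdots=r_n=0$) and $y_j\in Q_i-x\subseteq[-\lambda',\lambda']^2$ for every $j$ (obtained by setting $r_j=1$ and the remaining $r_l=0$), so the restriction $y_j\in\textup{B}_{\lambda/2}(0)$ is automatic from $\sqrt{2}\lambda'=\lambda/2$. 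By \eqref{eq:Gowersdef} and \eqref{eq:GCScorr} applied with $Q=Q_i$ and $f=\mathbbm{1}_{B\cap Q_i}$,
\[ \int_{\mathbb{R}^2}\int_{\textup{B}_{\lambda/2}(0)^n}\prod_{(r_1,\ldots,r_n)\in\{0,1\}^n}\mathbbm{1}_{B\cap Q_i}(x+r_1 y_1+\cdots+r_n y_n)\,\textup{d}y_1\cdots\textup{d}y_n\,\textup{d}x \gtrsim_n (\lambda')^{2(n+1-2^n)}|B\cap Q_i|^{2^n}. \]

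Finally, since the $Q_i$ are disjoint, for each fixed tuple $(x;y_1,\ldots,y_n)$ at most one summand of the quantity $\sum_i\prod_{(r_1,\ldots,r_n)}\mathbbm{1}_{B\cap Q_i}(x+r_1 y_1+\cdots+r_n y_n)$ is nonzero, so this sum is pointwise majorized by $\mathcal{F}_n(x;y_1,\ldots,y_n)$. Summing the previous display over $i$ and applying the power-mean inequality $\sum_i a_i^{2^n}\geq N^{1-2^n}\bigl(\sum_i a_i\bigr)^{2^n}$ with $a_i=|B\cap Q_i|$ and $\sum_i a_i=|B|$ produces a lower bound of order $\lambda^{2n}R^{2}(|B|/R^2)^{2^n}$ for the double integral in the first display, which cancels the $\lambda^{-2n}$ factor and delivers \eqref{eq:structured}. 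Once the pointwise estimate on $\phi_\lambda$ is in place, the argument is routine bookkeeping; the only conceptual step beyond \eqref{eq:GCScorr} is the convexity (Jensen) step that converts the per-cube bounds into a global density bound.
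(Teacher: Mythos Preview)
Your proof is correct and follows essentially the same approach as the paper: a pointwise lower bound on $\sigma_\lambda\ast\mathbbm{g}_\lambda$, a partition of $[0,R]^2$ into small squares, the per-cube Gowers-norm bound \eqref{eq:GCScorr}, and Jensen's inequality to aggregate. The only differences are cosmetic (you use the ball $\textup{B}_{\lambda/2}(0)$ and side-length $\lambda/(2\sqrt{2})$ where the paper uses the square $[-\lambda,\lambda]^2$ and side-length $\lambda$), and the minor caveat that $R$ need not be an integer multiple of $\lambda'$ is handled in the paper by covering with $\lceil R/\lambda\rceil^2$ half-open squares rather than literally partitioning.
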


\begin{proof}
Recall that
\[ \mathcal{N}^{1}_{\lambda}(\mathbbm{1}_B) =
\int_{(\mathbb{R}^2)^{n+1}} \mathcal{F}_n(x;y_1,\ldots,y_n) \prod_{k=1}^n (\sigma_{\lambda} \ast \mathbbm{g}_{\lambda})(y_k) \,\textup{d}y_1 \cdots \textup{d}y_n \,\textup{d}x .\]
On the right hand side we use an easy pointwise estimate,
\[ \sigma\ast\mathbbm{g} \gtrsim \mathbbm{1}_{\textup{B}_2(0)} \geq \mathbbm{1}_{[-1,1]^2}; \]
see \cite[Section~3.1]{K20:anisotrop} for a slightly more general formulation. That way we obtain
\[
\mathcal{N}^{1}_{\lambda}(\mathbbm{1}_B) \gtrsim_n
\lambda^{-2n} \int_{(\mathbb{R}^2)^{n+1}} \mathcal{F}_n(x;y_1,\ldots,y_n) \prod_{k=1}^n \mathbbm{1}_{[-\lambda,\lambda]^2}(y_k) \,\textup{d}y_1 \cdots \textup{d}y_n \,\textup{d}x.
\]
We partition $[0,R]^2$ into squares of side length $\lambda$. More precisely, we consider the set 
\[ \mathcal{Q}:=\bigg\{ [(k-1)\lambda,k\lambda) \times [(l-1)\lambda,l\lambda) \, : \, k,l\in\Big\{1,2, \dots, \Big\lceil\frac{R}{\lambda}\Big\rceil\Big\} \bigg\}, \]
which covers $[0,R]^2$ completely. Notice that
$\vert \mathcal{Q}\vert =\lceil R/\lambda\rceil^2\leq (2R/\lambda)^2$ 
and that each square $Q \in \mathcal{Q}$ has area $\lambda^2$. We impose further restrictions on the above integral by requiring that all the $2^n$ vertices of the degenerate paralelotope \eqref{eq:allpoints} lie within the same element of $\mathcal{Q}$, that is, due to the fact that the elements of $\mathcal{Q}$ are mutually disjoint, we can write
\begin{align*}
\mathcal{N}^{1}_{\lambda}(\mathbbm{1}_B) \gtrsim_n \lambda^{-2n}
\sum_{Q\in\mathcal{Q}}\int_{(\mathbb{R}^2)^{n+1}} \prod_{(r_1,\dots,r_n)\in\{0,1\}^n}\mathbbm{1}_{B\cap Q}(x+r_1y_1+\cdots+r_ny_n) & \\
\prod_{k=1}^n \mathbbm{1}_{[-\lambda,\lambda]^2}(y_k) \,\textup{d}y_1 \cdots \textup{d}y_n \,\textup{d}x & .
\end{align*}
Also note that if $x+r_1y_1+\cdots+r_ny_n$ all lie within the same square of side-length $\lambda$, it trivially follows that all $y_k$ belong to $[-\lambda,\lambda]^2$. Therefore, the last display becomes simply
\begin{align*}
\mathcal{N}^{1}_{\lambda}(\mathbbm{1}_B) \gtrsim_n \lambda^{-2n}
\sum_{Q\in\mathcal{Q}}\int_{(\mathbb{R}^2)^{n+1}} \prod_{(r_1,\dots,r_n)\in\{0,1\}^n} & \mathbbm{1}_{B\cap Q}(x+r_1y_1+\cdots+r_ny_n) \,\textup{d}y_1 \cdots \textup{d}y_n \,\textup{d}x,
\end{align*}
i.e., by the definition of the Gowers norms \eqref{eq:Gowersdef},
\[ \mathcal{N}^{1}_{\lambda}(\mathbbm{1}_B) \gtrsim_n \lambda^{-2n}
\sum_{Q\in\mathcal{Q}} \Vert \mathbbm{1}_{B\cap Q}\Vert_{\textup{U}^n(\mathbb{R}^d)}^{2^n}. \]
Using \eqref{eq:GCScorr} with $f=\mathbbm{1}_{B\cap Q}$ followed by Jensen's inequality, we obtain
\begin{align*}
\mathcal{N}^{1}_{\lambda}(\mathbbm{1}_B) & \gtrsim_n \lambda^{-2n}
\sum_{Q\in\mathcal{Q}} \vert Q\vert ^{-2^n+n+1} \Big(\int_{Q} \mathbbm{1}_{B\cap Q}\Big)^{2^n} 
= \lambda^{-2^{n+1}+2} \sum_{Q\in\mathcal{Q}} \vert B\cap Q\vert ^{2^n} \\
& \geq \lambda^{-2^{n+1}+2} \vert \mathcal{Q}\vert ^{-2^n+1} \Big(\sum_{Q\in\mathcal{Q}} \vert B\cap Q\vert  \Big)^{2^n}
= \lambda^{-2^{n+1}+2} \vert \mathcal{Q}\vert ^{-2^n+1} \vert B\vert ^{2^n} \\
& \gtrsim_n \lambda^{-2^{n+1}+2} \Big(\frac{R}{\lambda}\Big)^{-2^{n+1}+2} \vert B\vert ^{2^n}
= \Big(\frac{\vert B\vert }{R^2}\Big)^{2^n} R^{2}.
\end{align*}
This completes the proof of \eqref{eq:structured}.
\end{proof}


\section{The error part}
\label{sec:error}

\begin{proposition}
For every positive integer $J$, {  for all ``scales''} $0<\lambda_1<\cdots<\lambda_J$ satisfying $\lambda_{j+1}\geq2\lambda_j$ for $j=1,\ldots,J-1$, every number $R\geq2\lambda_J$, every measurable set $B\subseteq[0,R]^{2}$, and every $\varepsilon\in(0,1]$ we have the estimate:
\begin{equation}\label{eq:error}
\sum_{j=1}^{J} \big\vert \mathcal{N}^{\varepsilon}_{\lambda_j}(\mathbbm{1}_B)-\mathcal{N}^{1}_{\lambda_j}(\mathbbm{1}_B)\big\vert  \leq C_{\textup{err}}\varepsilon^{-3n}\log\Big(\frac{1}{\varepsilon}\Big) R^{2},
\end{equation}
where $C_{\textup{err}}\in(0,\infty)$ is some constant that depends only on $n$.
\end{proposition}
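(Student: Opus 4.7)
The plan is to reduce \eqref{eq:error} to a bound on a multilinear singular integral form of ``cubical'' type via a telescoping + heat-equation + kernel-splitting argument of the kind that has become standard in this area (cf.\ \cite{CMP15:roth,DKR18,DK22}). First I telescope the difference of products of $n$ smoothed spherical weights as $\prod_{k=1}^n a_k - \prod_{k=1}^n b_k = \sum_{m=1}^n (a_m-b_m)\prod_{k<m}a_k\prod_{k>m}b_k$, with $a_k = (\sigma_{\lambda_j}\ast\mathbbm{g}_{\varepsilon\lambda_j})(y_k)$ and $b_k = (\sigma_{\lambda_j}\ast\mathbbm{g}_{\lambda_j})(y_k)$. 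The symmetry of $\mathcal{F}_n$ in $y_1,\ldots,y_n$ lets me treat only the $m=n$ term, at the cost of a factor $n$. The heat equation \eqref{eq:heateq} then rewrites the single-variable Gaussian difference as the integral $-\int_{\varepsilon\lambda_j}^{\lambda_j}(2\pi t)^{-1}(\sigma_{\lambda_j}\ast\mathbbm{k}_t)(y_n)\,dt$, and identity \eqref{conv par der Gauss} with $\alpha=\beta=t/\sqrt 2$ factors $\mathbbm{k}_t = 2\sum_{l=1}^{2}\mathbbm{h}^{(l)}_{t/\sqrt 2}\ast\mathbbm{h}^{(l)}_{t/\sqrt 2}$. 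Distributing these two cancellation kernels across the $y_n$-shift in the recurrence \eqref{eq:rekurzija} splits the integrand into two $\mathcal{F}_{n-1}$ factors, each now equipped with one copy of the derivative kernel $\mathbbm{h}^{(l)}_{t/\sqrt 2}$.

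After these reductions the $j$-th summand on the left of \eqref{eq:error} is controlled by a sum over $l\in\{1,2\}$ of singular integral forms of the schematic shape
\[
\int_{\varepsilon\lambda_j}^{\lambda_j}\!\int \mathcal{F}_{n-1}(x;y')\,\mathcal{F}_{n-1}(x+u+v;y')\,(\sigma_{\lambda_j}\ast\mathbbm{h}^{(l)}_{t/\sqrt 2})(u)\,\mathbbm{h}^{(l)}_{t/\sqrt 2}(v)\,W_j(y')\,dx\,du\,dv\,dy'\,\frac{dt}{t},
\]
where $y'=(y_1,\ldots,y_{n-1})$ and $W_j$ collects the $n-1$ surviving smoothed spherical weights, each with $\|\sigma_{\lambda_j}\ast\mathbbm{g}_{s\lambda_j}\|_{\infty}\lesssim\lambda_j^{-2}s^{-2}$ for $s\in\{\varepsilon,1\}$. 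I would then apply Cauchy--Schwarz $n-1$ times, iterating in $y_1,\ldots,y_{n-1}$ so that each step doubles the number of $\mathbbm{1}_B$ factors and eliminates one component of $W_j$, replacing $\mathcal{F}_{n-1}$ by $\mathcal{F}_{n-2}$, and so on. The resulting Littlewood--Paley-type square function in the continuous scale $t$ is handled by Plancherel and the explicit Fourier transforms \eqref{eq:FtofGauss}, yielding a bound of order $R^2$ for a single $j$ after one factor of $\log(1/\varepsilon)$ is absorbed from the $dt/t$ integration. The dyadic separation $\lambda_{j+1}\geq 2\lambda_j$ guarantees that the intervals $[\varepsilon\lambda_j,\lambda_j]$ overlap at most $\lesssim\log(1/\varepsilon)$ times at any fixed $t$, so the sum in $j$ collapses into a single $R^2$ factor.

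The main obstacle I expect is the bookkeeping in the Cauchy--Schwarz iteration, namely keeping careful track of the $L^{\infty}$-losses on the $n-1$ smoothed spherical weights together with the loss on the derivative kernel $\mathbbm{h}^{(l)}_{t/\sqrt 2}$ near its smallest scale $t\sim\varepsilon\lambda_j$, so that they combine to produce precisely the polynomial factor $\varepsilon^{-3n}$ in \eqref{eq:error} rather than a worse power. A cleaner alternative would be to invoke directly the cubical singular Brascamp--Lieb estimates of \cite{DST22} (as the authors explicitly observe after introducing \eqref{eq:BLgeneral}), but the self-contained argument above is more in keeping with the elementary style the paper aims for.
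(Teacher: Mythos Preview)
Your overall architecture (heat equation, factorization $\mathbbm{k}_t=2\sum_l\mathbbm{h}^{(l)}_{t/\sqrt2}\ast\mathbbm{h}^{(l)}_{t/\sqrt2}$, recurrence \eqref{eq:rekurzija}, Cauchy--Schwarz, lacunarity) matches the paper's, but there is a genuine gap at the step where you ``collapse the sum in $j$.'' Your overlap argument for the intervals $[\varepsilon\lambda_j,\lambda_j]$ only works if, after all the manipulations, the integrand depends on $j$ \emph{solely} through the range of $t$. In your scheme that is not the case: both the weight $W_j(y')=\prod_{k<n}(\sigma_{\lambda_j}\ast\mathbbm{g}_{s_k\lambda_j})(y_k)$ and the kernel $(\sigma_{\lambda_j}\ast\mathbbm{h}^{(l)}_{t/\sqrt2})(u)$ still carry $\lambda_j$. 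Neither the $\textup{L}^\infty$ bounds you propose nor the iterated Cauchy--Schwarz remove this dependence; using $\|\sigma_{\lambda_j}\ast\mathbbm{g}_{s\lambda_j}\|_\infty$ leaves an unweighted $\int dy_k$ that costs an unacceptable factor $R^2$ per variable, and a Cauchy--Schwarz step with $W_j^{(k)}(y_k)\,dy_k$ as measure keeps that weight in the resulting integral rather than eliminating it. So the sum over $j$ does not reduce to a single square-function integral in $t$, and the argument stalls exactly where you thought the only issue was ``bookkeeping of $\varepsilon$-losses.''

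The paper resolves this by a different mechanism. It differentiates the full product (all $n$ factors at level $t\lambda$) rather than telescoping asymmetrically, introduces an auxiliary scale $s\sim t\lambda_j$ via $\int_{\theta t\lambda_j}^{e\theta t\lambda_j}\frac{ds}{s}=1$, and then invokes the pointwise Gaussian-domination inequalities
\[
(\sigma_{\lambda_j}\ast\mathbbm{g}_{t\lambda_j})(x)\ \lesssim\ \varepsilon^{-3}\int_1^\infty\mathbbm{g}_{s\gamma}(x)\,\frac{d\gamma}{\gamma^2},
\qquad
(\sigma_{\lambda_j}\ast\mathbbm{g}_{r})(x)\ \lesssim\ \varepsilon^{-3}\int_1^\infty\mathbbm{g}_{s\gamma}(x)\,\frac{d\gamma}{\gamma^2}
\]
(these are \eqref{eq:erraux2}--\eqref{eq:erraux3}). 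This replaces every $\sigma_{\lambda_j}\ast\mathbbm{g}$ by pure Gaussians at scale $s$, so after a \emph{single} Cauchy--Schwarz the integrand no longer contains $\lambda_j$; the $n$ applications of the domination are precisely what produce the $\varepsilon^{-3n}$. The lacunarity then makes the $s$-intervals essentially disjoint (overlap $\le2$, not $\log(1/\varepsilon)$), and the remaining $\int_\varepsilon^1\frac{dt}{t}$ supplies the single logarithm. What is left is the nonnegative form $\Theta^{n,m}_{\gamma_1,\ldots,\gamma_n}(f)$ in \eqref{eq:theform}, bounded by $2\pi\|f\|_{\textup{L}^{2^n}}^{2^n}\le2\pi R^2$ via one more telescoping identity, not by an $(n{-}1)$-fold Cauchy--Schwarz iteration. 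Your sketch is missing exactly this Gaussian-domination step, and without it the proof does not close.
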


\begin{proof}
Write $f=\mathbbm{1}_{B}$. Differentiating the product and using the heat equation \eqref{eq:heateq} we get
\begin{equation}\label{eq:productrule}
\frac{\partial}{\partial t} \prod_{k=1}^n(\sigma_{\lambda} \ast \mathbbm{g}_{t\lambda})(y_k)
= \frac{1}{2\pi t} \sum_{m=1}^{n} (\sigma_{\lambda} \ast \mathbbm{k}_{t\lambda})(y_m)
\Big( \prod_{\substack{1\leq k\leq n\\ k\neq m}} (\sigma_{\lambda} \ast \mathbbm{g}_{t\lambda})(y_k) \Big).
\end{equation}
Therefore, for $0<\alpha<\beta\leq1$ we can write
\begin{equation*}
\mathcal{N}^{\alpha}_{\lambda}(f)-\mathcal{N}^{\beta}_{\lambda}(f) = \sum_{m=1}^{n} \mathcal{L}_{\lambda}^{\alpha,\beta,m}(f),
\end{equation*}
where
\begin{align*}
\mathcal{L}_{\lambda}^{\alpha,\beta,m}(f) :=
-\frac{1}{2\pi} \int_{\alpha}^{\beta} 
 \int_{(\mathbb{R}^2)^{n+1}} 
 &\mathcal{F}_n(x;y_1,\ldots,y_n) \\
& (\sigma_{\lambda} \ast \mathbbm{k}_{t \lambda})(y_m) \prod_{\substack{1\leq k\leq n\\ k\neq m}}(\sigma_{\lambda} \ast \mathbbm{g}_{t\lambda})(y_k) 
\,\textup{d}y_1 \cdots \textup{d}y_n \,\textup{d}x \,\frac{\textup{d}t}{t}.
\end{align*}
That way we obtain the bound
\begin{equation}\label{error to bound}
\sum_{j=1}^{J} \big\vert \mathcal{N}^{\varepsilon}_{\lambda_j}(\mathbbm{1}_B)-\mathcal{N}^{1}_{\lambda_j}(\mathbbm{1}_B)\big\vert  \lesssim  \sum_{m=1}^{n} \sum_{j=1}^{J} \big\vert \mathcal{L}_{\lambda_j}^{\varepsilon,1,m}(\mathbbm{1}_B) \big\vert  .
\end{equation}
Due to the symmetry of the variables $y_1,\dots, y_n$ in the previous expressions, it is sufficient to work with the last summand (i.e., the one for $m=n$) on the right hand side of \eqref{error to bound}.
Let $\theta= 10^{-1}e^{-1}$ and observe that, for any $\lambda, t >0$, 
\[
\int_{\theta t \lambda}^{e \theta t \lambda} \frac{\textup{d}s}{s}=1 .
\]
For $j \in \lbrace 1,2, \ldots J\rbrace$, $t \in [\varepsilon,1]$, and $s \in [\theta t \lambda_j, e \theta t \lambda_j]$, set 
\[ r=r(j,t,s):=\sqrt{(t \lambda_j)^2-2s^2}. \]
A simple calculation shows that $r\sim s \sim t\lambda_j$.
Using the Gaussian convolution identities \eqref{conv par der Gauss} and \eqref{conv Gauss Laplace}, we can rewrite  
\begin{equation}\label{eq:erraux1}
\mathbbm {k}_{t\lambda_j}=\frac{(t\lambda_j)^2}{s^2}\sum_{l=1}^2 \mathbbm{h}^{(l)}_{s} \ast \mathbbm{h}^{(l)}_{s} \ast \mathbbm{g}_{r}.
\end{equation}
The following Gaussian inequalities appear as \cite[(4.5)]{K20:anisotrop} and \cite[(4.6)]{K20:anisotrop}, respectively:
\begin{align}
(\sigma_{\lambda_j}\ast\mathbbm{g}_{t\lambda_j})(x) & \lesssim \varepsilon^{-3} \int_1^{\infty} \mathbbm{g}_{s\gamma}(x) \,\frac{\textup{d}\gamma}{\gamma^2},
\label{eq:erraux2} \\
(\sigma_{\lambda_j}\ast\mathbbm{g}_{r})(x) & \lesssim \varepsilon^{-3} \int_1^{\infty} \mathbbm{g}_{s\gamma}(x) \,\frac{\textup{d}\gamma}{\gamma^2}.
\label{eq:erraux3}
\end{align}
Using the identity \eqref{eq:erraux1} and expanding out the convolution, using the inequalities \eqref{eq:erraux2} and \eqref{eq:erraux3}, recalling the recursive identity \eqref{eq:rekurzija} and the fact that $\mathbbm{h}^{(l)}_s$ is odd, and introducing the change of variables $u=y_n+x$, we obtain that
\[ \sum_{j=1}^{J} \big\vert \mathcal{L}_{\lambda_j}^{\varepsilon,1,n}(\mathbbm{1}_B) \big\vert  \]
is at most
\begin{align*}
\varepsilon^{-3n}\sum_{l=1}^2 \sum _{j=1}^J \int_{[1,\infty)^n}\int_{\varepsilon}^1\int_{\theta t \lambda_j}^{e \theta t \lambda_j}\int_{(\mathbb{R}^2)^{n+1}}
&\Big\vert  \int_{\mathbb{R}^2}\mathcal{F}_{n-1}(u;y_1,\ldots,y_{n-1}) \mathbbm{h}^{(l)}_{s}(u-z_1) \,\textup{d}u \Big\vert \\
&\Big\vert  \int_{\mathbb{R}^2}  \mathcal{F}_{n-1}(x;y_1,\ldots,y_{n-1}) \mathbbm{h}^{(l)}_{s}(x-z_2)  \,\textup{d}x \Big\vert  \\
& \mathbbm{g}_{s \gamma_n} (z_1 -z_2)  
\prod_{1\leq k\leq n-1} \mathbbm{g}_{s \gamma_k}(y_k) \\
& \textup{d}z_1\,\textup{d}z_2\,\textup{d}y_1 \cdots \textup{d}y_{n-1} \,\frac{\textup{d}s}{s} \,\frac{\textup{d}t}{t}\,\frac{\textup{d}\gamma_1}{\gamma_1^2} \cdots\,\frac{\textup{d}\gamma_n}{\gamma_n^2} .
\end{align*}
The expression above can be treated in the same way as a similar expression in \cite[Subsection 4.2]{K20:anisotrop}, namely, we apply the Cauchy--Schwarz inequality to separate the product of functions $\mathcal{F}_{n-1}$ and then we notice that after a change of variables we can multiply the two square roots to obtain
\begin{align*}
\varepsilon^{-3n} \sum_{l=1}^2 \sum _{j=1}^J \int_{[1,\infty)^n}\int_{\varepsilon}^1\int_{\theta t \lambda_j}^{e \theta t \lambda_j}\int_{(\mathbb{R}^2)^{n+1}}
\Big(& \int_{\mathbb{R}^2}  \mathcal{F}_{n-1}(u;y_1,\ldots,y_{n-1}) \mathbbm{h}^{(l)}_{s}(u-z_1)\,\textup{d}u \Big)^2 \\
&\mathbbm{g}_{s \gamma_n} (z_1 -z_2) \prod_{1\leq k\leq n-1} \mathbbm{g}_{s \gamma_k}(y_k)  \\
&\textup{d}z_1\,\textup{d}z_2\,\textup{d}y_1 \cdots \textup{d}y_{n-1} \,\frac{\textup{d}s}{s} \,\frac{\textup{d}t}{t}\,\frac{\textup{d}\gamma_1}{\gamma_1^2} \cdots\,\frac{\textup{d}\gamma_n}{\gamma_n^2} .
\end{align*}
Using the translation invariance of the Lebesgue measure, we integrate out the Gaussian dilated by $s \gamma_n$ in the variable $z_2$ and then we integrate out $1/\gamma_n^{2}$. Next, we expand the square term inside the integral, and finally we undo the previously introduced change of variables and use the recursive identity \eqref{eq:rekurzija}.
This will allow us to notice a convolution of the partial derivatives of the Gaussian, which we can rewrite as a  dilated Gaussian Laplacian $\mathbbm{k}$ using identity \eqref{conv par der Gauss}  again. Finally, we are in a position to eliminate the scales $\lambda_j$ out of the consideration.
We observe that due to the fact that the scales $\lambda_j$ grow for at least a factor of $2$, for a fixed $t \in [\varepsilon, 1]$, each $s\in(0,\infty )$ belongs to at most two of the intervals $[\theta t \lambda_j, e \theta t \lambda_j]$, therefore, we can bound the previous by
\begin{align*}
-\varepsilon^{-3n} \int_{[1,\infty)^{n-1}}\int_{\varepsilon}^1\int_{0}^{\infty}\int_{(\mathbb{R}^2)^{n+1}}
&  \mathcal{F}_{n}(x;y_1,\ldots,y_{n}) \mathbbm{k}_{\sqrt{2}s}(y_n) 
\\
& \prod_{1\leq k\leq n-1}\mathbbm{g}_{s \gamma_k}(y_k) 
 \,\textup{d}x\,\textup{d}y_1 \cdots \textup{d}y_{n} \,\frac{\textup{d}s}{s} \,\frac{\textup{d}t}{t}\,\frac{\textup{d}\gamma_1}{\gamma_1^2} \cdots\,\frac{\textup{d}\gamma_{n-1}}{\gamma_{n-1}^2} . 
\end{align*}
Integrating with respect to $t$ further yields the factor of $\log(1/\varepsilon)$.

To complete the proof, we need to bound { the above} integral form, which is similar to \cite[(2.11)]{K20:anisotrop}. It is precisely { a particular} case of the singular Brascamp--Lieb form \eqref{eq:BLgeneral} mentioned in the introduction, { with the singular kernel given by
\[ K(y_1,\ldots,y_n) = \int_{0}^{\infty} \mathbbm{k}_{\sqrt{2}s}(y_n)  \prod_{1\leq k\leq n-1}\mathbbm{g}_{s \gamma_k}(y_k) \,\frac{\textup{d}s}{s} . \]
As we have already said, instead of invoking and twisting the existing literature, we will rather reuse a few tricks from \cite{Kov12,Dur15} needed to complete the proof in this section.}

For $n \in \mathbb{N}$, $m \in \{1, \ldots, n \}$, scales $\gamma_1,\ldots, \gamma_n \in ( 0 ,\infty )$ and a compactly supported $f \in \textup{L}^{2^n}(\mathbb{R}^2)$, let
\begin{align}
\Theta_{\gamma_1, \ldots, \gamma_n}^{n,m}(f):=-\int_0^{\infty}\int_{(\mathbb{R}^2)^{n+1}}
& \mathcal{F}_n(x;y_1,\ldots, y_n) \mathbbm{k}_{s\gamma_m}(y_m) \nonumber \\
& \Big(\prod_{\substack{1 \leq k \leq n \\ k \neq m }}\mathbbm{g}_{s\gamma_k}(y_k)\Big)
\,\textup{d}x\,\textup{d}y_1\cdots\textup{d}y_n \, \frac{\textup{d}s}{s}. \label{eq:theform}
\end{align}
Using the convolution identity \eqref{conv par der Gauss} it follows that
\[
\mathbbm{k}_{s\gamma_m}=2\sum_{l=1}^2\mathbbm{h}_{s\gamma_m/\sqrt{2}}^{(l)} \ast \mathbbm{h}_{s\gamma_m/\sqrt{2}}^{(l)} .
\]
For fixed $y_1, \dots, y_n$ and $m \in \{1, \dots, n \}$, let 
\[ F_m(u):=\mathcal{F}_{n-1}(u;y_1,\dots,y_{m-1},y_{m+1},\dots,y_n). \]
Using a recursive relation similar to \eqref{eq:rekurzija}, making the change of variables $u=x+y_n$ and using the above identity, we can write
\begin{align*}
\Theta_{\gamma_1, \ldots ,\gamma_n}^{n,m}(f)=-2\sum_{l=1}^2\int_0^{\infty}\int_{(\mathbb{R}^2)^{n-1}}
&\left\langle F_m \ast\mathbbm{h}_{s\gamma_m/\sqrt{2}}^{(l)} \ast \mathbbm{h}_{s\gamma_m/\sqrt{2}}^{(l)}, F_m\right\rangle_{\textup{L}^2(\mathbb{R}^2)}\\
&\Big(\prod_{\substack{1 \leq k \leq n \\ k \neq m }}\mathbbm{g}_{s\gamma_k}(y_k)\Big)
\,\textup{d}y_1\cdots\,\textup{d}y_{m-1}\,\textup{d}y_{m+1}\cdots\textup{d}y_n \, \frac{\textup{d}s}{s}.
\end{align*}
A simple calculation shows that
\[
\left\Vert F_m \ast\mathbbm{h}_{s\gamma_m/\sqrt{2}}^{(l)}\right\Vert_{\textup{L}^2(\mathbb{R}^2)}^2=-
\left\langle F_m \ast\mathbbm{h}_{s\gamma_m/\sqrt{2}}^{(l)} \ast \mathbbm{h}_{s\gamma_m/\sqrt{2}}^{(l)}, F_m\right\rangle_{\textup{L}^2(\mathbb{R}^2)} ,
\]
from which follows that $\Theta_{\gamma_1, \ldots, \gamma_n}^{n,m}(f)$ is well defined and nonnegative.
Using the product formula \eqref{eq:productrule} again
and the fundamental theorem of calculus, we obtain
\begin{align*}
\sum_{m=1}^n\Theta_{\gamma_1, \ldots, \gamma_n}^{n,m}(f) &= 2\pi\lim_{\alpha \to 0^+}\int_{(\mathbb{R}^2)^{n+1}}\mathcal{F}_n(x;y_1,\ldots, y_n)\int_0^{\infty}
\Big(\prod_{k=1}^n\mathbbm{g}_{\alpha\gamma_k}(y_k)\Big)
\,\textup{d}s\,\textup{d}x\,\textup{d}y_1\cdots\textup{d}y_n \\
& \quad -2\pi\lim_{\beta \to \infty}\int_{(\mathbb{R}^2)^{n+1}}\mathcal{F}_n(x;y_1,\ldots, y_n)\int_0^{\infty}
\Big(\prod_{k=1}^n\mathbbm{g}_{\beta\gamma_k}(y_k)\Big)
\,\textup{d}s\,\textup{d}x\,\textup{d}y_1\cdots\textup{d}y_n .
\end{align*}
Interpreting the above limits as distributions, we find that the second limit is equal to $0$, while the first is equal to
\[
\int_{\mathbb{R}^2}f(x)^{2^n}\,\textup{d}x=\Vert f \Vert_{\textup{L}^{2^n}(\mathbb{R}^2)}^{2^n}.
\]
{ Thus,
\[ \sum_{m=1}^n\Theta_{\gamma_1, \ldots, \gamma_n}^{n,m}(f) = 2\pi \Vert f \Vert_{\textup{L}^{2^n}(\mathbb{R}^2)}^{2^n} \]
and each of the summands is nonnegative.}
In particular,
\begin{equation}\label{eq:singspec} 
0\leq  \Theta_{\gamma_1, \ldots, \gamma_n}^{n,m}(f) \leq 2\pi \Vert f \Vert_{\textup{L}^{2^n}(\mathbb{R}^2)}^{2^n} 
\end{equation}
for any choice of the parameters.
Using this and summing in $m$ we finally get that \eqref{error to bound} is bounded by
\begin{align*}
&\varepsilon^{-3n}\log\Big(\frac{1}{\varepsilon}\Big) \int_{[1,\infty)^{n-1}}\sum_{m=1}^n\Theta^{n,m}_{ \gamma_1,\ldots,\gamma_{m-1},\sqrt{2}, \gamma_{m+1},\ldots, \gamma_n}(f)
\frac{\textup{d}\gamma_1}{\gamma_1^2} \cdots\,\frac{\textup{d}\gamma_{n-1}}{\gamma_{n-1}^2} \\
&\lesssim\varepsilon^{-3n}\log\Big(\frac{1}{\varepsilon}\Big) \Vert f \Vert_{\textup{L}^{2^n}(\mathbb{R}^2)}^{2^n}
\leq \varepsilon^{-3n}\log\Big(\frac{1}{\varepsilon}\Big)R^{2}.
\end{align*}
This finalizes the proof of \eqref{eq:error}.
\end{proof}


\section{The uniform part}
\label{sec:uniform}

\begin{proposition}
For real numbers $\lambda>0$, $\varepsilon\in (0,1]$, and a measurable set $B\subseteq[0,R]^2$ we have
\begin{equation}\label{eq:uniform}
\big\vert \mathcal{N}^{0}_{\lambda}(\mathbbm{1}_B)-\mathcal{N}^{\varepsilon}_{\lambda}(\mathbbm{1}_B)\big\vert  \leq C_{\textup{uni}} \varepsilon^{1/2} R^2,
\end{equation}
where $C_{\textup{uni}}\in(0,\infty)$ is some constant that depends only on $n$.
\end{proposition}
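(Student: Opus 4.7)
The plan is to telescope the difference across the $n$ factors of the product measure and then estimate a single ``gap'' via Fourier analysis. Setting $\mu := \sigma_\lambda$ and $\mu^\varepsilon := \sigma_\lambda \ast \mathbbm{g}_{\varepsilon\lambda}$ (both probability measures), I will write
\[
\mathcal{N}^{\varepsilon}_{\lambda}(\mathbbm{1}_B) - \mathcal{N}^{0}_{\lambda}(\mathbbm{1}_B) = \sum_{j=1}^n \mathcal{E}_j,
\]
where $\mathcal{E}_j$ pairs $\mathcal{F}_n(x;y_1,\ldots,y_n)$ with $\prod_{k<j} d\mu^\varepsilon(y_k) \, d(\mu^\varepsilon - \mu)(y_j) \prod_{k>j} d\mu(y_k)\,dx$. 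Symmetry of $\mathcal{F}_n$ in $y_1,\ldots,y_n$ reduces the task to bounding $|\mathcal{E}_n|$ and paying a factor of $n$.

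Next, I will invoke the recurrence \eqref{eq:rekurzija} to split $\mathcal{F}_n(x;y_1,\ldots,y_n) = H(x)H(x+y_n)$, where $H(x) := \mathcal{F}_{n-1}(x;y_1,\ldots,y_{n-1})$, and perform the $x$-integral first to recognize the autocorrelation
\[
K(y_n) := \int_{\mathbb{R}^2} H(x) H(x+y_n)\,dx, \qquad \widehat{K}(\xi) = |\widehat{H}(\xi)|^2.
\]
Since $\sigma$ is rotationally symmetric, Parseval-type pairing yields
\[
\int_{\mathbb{R}^2} K \,d(\mu^\varepsilon - \mu) = \int_{\mathbb{R}^2} |\widehat{H}(\xi)|^2 \,\widehat{\sigma}(\lambda\xi)\bigl(e^{-\pi\varepsilon^2\lambda^2|\xi|^2} - 1\bigr)\,d\xi.
\]

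The heart of the argument is then the uniform pointwise estimate
\[
|\widehat{\sigma}(\lambda\xi)|\,\bigl|1 - e^{-\pi\varepsilon^2\lambda^2|\xi|^2}\bigr| \lesssim \varepsilon^{1/2} \qquad (\xi \in \mathbb{R}^2,\ 0<\varepsilon\le 1),
\]
which I will derive by combining the classical stationary-phase / Bessel decay $|\widehat{\sigma}(\eta)| \lesssim (1+|\eta|)^{-1/2}$ for the spherical measure on $\mathbb{S}^1 \subset \mathbb{R}^2$ with $|1 - e^{-\pi s}| \le \min(\pi s, 1)$: writing $r := \lambda|\xi|$, the function $(1+r)^{-1/2}\min(\varepsilon^2 r^2, 1)$ attains its supremum at $r \sim 1/\varepsilon$, where its value is $\sim \varepsilon^{1/2}$. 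Feeding this into the Parseval formula, invoking Plancherel $\|\widehat{H}\|_{L^2}^2 = \|H\|_{L^2}^2$, and using that $H$ is $\{0,1\}$-valued and supported in $B$ (so $\|H\|_{L^2}^2 = \|H\|_{L^1} \le |B| \le R^2$) yields $\bigl|\int K \,d(\mu^\varepsilon-\mu)\bigr| \lesssim \varepsilon^{1/2} R^2$. The remaining $n-1$ integrations against the probability measures $d\mu^\varepsilon(y_k)$ are free of cost, giving $|\mathcal{E}_n| \lesssim \varepsilon^{1/2} R^2$ and, after summing over $j$, the desired estimate \eqref{eq:uniform}.

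The only genuinely delicate point is the sharp $\varepsilon^{1/2}$ exponent in the pointwise Fourier bound: this exponent matches exactly the Fourier decay rate $\tfrac12$ of the spherical measure on $\mathbb{S}^1$, and any loss in the balancing of $|\widehat{\sigma}|$ against the Gaussian cutoff would propagate directly to a weaker final bound. Everything else in the argument is routine telescoping and Plancherel.
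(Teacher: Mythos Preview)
Your argument is correct and somewhat more direct than the paper's own proof. The paper does not telescope the product measure directly; instead it differentiates $\varepsilon\mapsto\mathcal{N}^{\varepsilon}_{\lambda}$ via the heat equation \eqref{eq:heateq}, obtaining the forms $\mathcal{L}_{\lambda}^{\theta,\varepsilon,m}$ already introduced in Section~\ref{sec:error}, and then bounds the Fourier symbol $|\widehat{\sigma}(\lambda\xi)\widehat{\mathbbm{k}}(t\lambda\xi)|\lesssim t^{1/2}$ before integrating $\int_{\theta}^{\varepsilon} t^{-1/2}\,\textup{d}t$ and letting $\theta\to0^{+}$. Your approach replaces this ``differentiate--bound--integrate'' detour with a single pointwise estimate on $|\widehat{\sigma}(\lambda\xi)|\,|1-\widehat{\mathbbm{g}}(\varepsilon\lambda\xi)|$, which is cleaner and avoids the heat-equation machinery; the paper's route has the advantage of reusing verbatim the $\mathcal{L}^{\alpha,\beta,m}$ decomposition from the error part, keeping the two sections structurally parallel. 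One minor remark: your appeal to ``symmetry of $\mathcal{F}_n$'' to reduce to $\mathcal{E}_n$ is slightly imprecise, since the hybrid terms $\mathcal{E}_j$ carry different mixtures of $\mu$ and $\mu^{\varepsilon}$ on the remaining variables; but this is harmless, because your bound on the inner integral $\int K\,\textup{d}(\mu^{\varepsilon}-\mu)$ is uniform in $y_1,\ldots,y_{n-1}$ and the outer integrations are against probability measures regardless of whether they are $\mu$ or $\mu^{\varepsilon}$.
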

\begin{proof}
Again write $f=\mathbbm{1}_B$. Motivated by the limit \eqref{eq:Nconvergence}, instead of bounding $\big \vert  \mathcal{N}^{0}_{\lambda}(\mathbbm{1}_B)-\mathcal{N}^{\varepsilon}_{\lambda}(\mathbbm{1}_B)\big\vert $, we rather focus on controlling $\big \vert \mathcal{N}^{\theta}_{\lambda}(\mathbbm{1}_B)-\mathcal{N}^{\varepsilon}_{\lambda}(\mathbbm{1}_B)\big\vert $ for every $0<\theta<\varepsilon\leq 1$. This allows us to reuse the approach from the previous section, which has already proven itself convenient. We begin by bounding $\big\vert  \mathcal{L}_{\lambda}^{\theta,\varepsilon,m}(f)\big\vert $ for a fixed $m \in \{1,\ldots,n\}$. Recall
\begin{align*}
\mathcal{L}_{\lambda}^{\theta,\varepsilon,m}(f)=\frac{-1}{2\pi}\int_\theta^{\varepsilon}\int_{(\mathbb{R}^2)^{n+1}} &\mathcal{F}_{n}(x;y_1,\ldots,y_n)\\
& (\sigma_{\lambda}\ast \mathbbm{k}_{\lambda t})(y_m)\prod_{\substack{1\leq k \leq n \\ k\neq m}}(\sigma_{\lambda}\ast \mathbbm{g}_{t\lambda})(y_k)
\,\textup{d}x\,\textup{d}y_1\cdots\textup{d}y_n \frac{\textup{d}t}{t} .
\end{align*}

Let us focus our attention only on the integrals in the variables $x$ and $y_m$. Expanding the previous display { using \eqref{eq:rekurzija}, we obtain}
\begin{align*}
\int_{(\mathbb{R}^2)^2}
\mathcal{F}_{n-1}(x+y_m;y_1,\ldots, y_{m-1},y_{m+1}, \ldots,y_{n})
\mathcal{F}_{n-1}(x;y_1,\ldots, y_{m-1},y_{m+1}, \ldots,y_{n}) & \\
(\sigma_{\lambda} \ast \mathbbm{k}_{t \lambda})((x+y_m)-x)
\,\textup{d}y_m\,\textup{d}x & .
\end{align*}
Using (by now standard) change of variables $u=x+y_m$ we notice a triple convolution
\[
\int_{\mathbb{R}^2} \big (F_m \ast \sigma_{\lambda} \ast \mathbbm{k}_{t \lambda}\big)(u) F_m(u)\,\textup{d}u.
\]
Applying Plancherel's theorem and the basic identity for the Fourier transform of a convolution, we can write the previous integral as 
\[
\int_{\mathbb{R}^2}
\widehat{F_m}(\xi)\widehat{\sigma_{\lambda}}(\xi)\widehat{\mathbbm{k}_{t \lambda}}(\xi)
\overline{\widehat{F_m}(\xi)}
\,\textup{d}\xi.
\]
Using the decay of the Fourier transform of the spherical measure and the basic identity for the Fourier transform of the Laplacian, we get the following bound
{ 
\begin{align*}
\big \vert  \widehat{\sigma}(\lambda \xi) \widehat{\mathbbm{k}}(t \lambda \xi)\big \vert  &\lesssim
\lambda^{-1/2} \vert \xi\vert ^{-1/2} t^2 \lambda^2 \vert \xi\vert ^2 e^{- \pi t^2 \vert \lambda \vert ^2\vert \xi\vert ^2}
= t^{1/2} (t \lambda \vert \xi\vert )^{3/2}e^{- \pi t^2 \vert \lambda \vert ^2\vert \xi\vert ^2}\\
&\leq t^{1/2} \sup_{u \in [0,\infty)}u^{3/2}e^{-\pi  u^2}
\lesssim t^{1/2}.
\end{align*}
}
Combining these results and using Plancherel's identity once more, we obtain the bound
\begin{align*}
&\big\vert  \mathcal{L}_{\lambda}^{\theta,\varepsilon,m}(f) \big\vert  \lesssim 
\int_{\theta}^{\varepsilon}\int_{(\mathbb{R}^2)^{n}}   t^{1/2}  \big\vert \widehat{F_m}(\xi)\big\vert ^2
\prod_{\substack{1\leq k \leq n \\ k\neq m}}(\sigma_{\lambda}\ast \mathbbm{g}_{t\lambda})(y_k)
\,\textup{d}\xi\,\textup{d}y_1\cdots\textup{d}y_{m-1}\,\textup{d}y_{m+1}\cdots\textup{d}y_n \frac{\textup{d}t}{t} \\
&=\int_{\theta}^{\varepsilon}\int_{(\mathbb{R}^2)^{n-1}}   t^{1/2}  { \big\Vert \widehat{F_m}\big\Vert_{\textup{L}^2(\mathbb{R}^2)}^2}
\prod_{\substack{1\leq k \leq n \\ k\neq m}}(\sigma_{\lambda}\ast \mathbbm{g}_{t\lambda})(y_k)
\,\textup{d}y_1\cdots\textup{d}y_{m-1}\,\textup{d}y_{m+1}\cdots\textup{d}y_n \frac{\textup{d}t}{t} \\
&=\int_{\theta}^{\varepsilon}\int_{(\mathbb{R}^2)^{n-1}}   t^{1/2}  { \left\Vert F_m\right\Vert_{\textup{L}^2(\mathbb{R}^2)}^2}
\prod_{\substack{1\leq k \leq n \\ k\neq m}}(\sigma_{\lambda}\ast \mathbbm{g}_{t\lambda})(y_k)
\,\textup{d}y_1\cdots\textup{d}y_{m-1}\,\textup{d}y_{m+1}\cdots\textup{d}y_n \frac{\textup{d}t}{t}.
\end{align*}

Recall that $f=\mathbbm{1}_{B}$ and $B\subseteq [0,R]^2$, use the trivial bound on  { $\left\Vert F_m\right\Vert_{\textup{L}^2(\mathbb{R}^2)}^2$}, and integrate in all remaining variables except $t$ to obtain
\begin{align*}
\big\vert  \mathcal{L}_{\lambda}^{\theta,\varepsilon,m}(f) \big\vert  &\lesssim 
\int_{\theta}^{\varepsilon}\int_{(\mathbb{R}^2)^{n-1}}   t^{1/2}  R^2
\prod_{\substack{1\leq k \leq n \\ k\neq m}}(\sigma_{\lambda}\ast \mathbbm{g}_{t\lambda})(y_k)
\,\textup{d}y_1\cdots\textup{d}y_{m-1}\,\textup{d}y_{m+1}\cdots\textup{d}y_n \frac{\textup{d}t}{t}\\
&\lesssim
\int_{\theta}^{\varepsilon}   t^{-1/2}  R^2
\,\textup{d}t
\lesssim R^2(\varepsilon^{1/2}-\theta^{1/2}).
\end{align*}
Hence, it follows that
\begin{align*}
\big\vert \mathcal{N}^{\theta}_{\lambda}(\mathbbm{1}_B)-\mathcal{N}^{\varepsilon}_{\lambda}(\mathbbm{1}_B)\big\vert 
\leq
\sum_{m=1}^n\big\vert  \mathcal{L}_{\lambda}^{\theta,\varepsilon,m}(f) \big\vert  &\lesssim
R^2(\varepsilon^{1/2}-\theta^{1/2})\,
\end{align*}
Finally, letting $\theta$ go to $0$ from the right and using \eqref{eq:Nconvergence}, we obtain \eqref{eq:uniform}, as desired.
\end{proof}

\section{Combination of the estimates}

To prove Theorem \ref{thm:large}, we argue by contradiction, i.e., assume the statement does not hold. Then, there exists a strictly increasing sequence of scales $(\lambda_j)_{j \in \mathbb{N}}$ such that $A$ does not contain an isometric copy of $\lambda_j \cdot \Gamma_n$, for all $j\in\mathbb{N}$.
By passing to a subsequence, we can assume that $\lambda_{j+1}\geq 2\lambda_{j}$ for all $j \in \mathbb{N}$. Choose $\varepsilon\in (0 ,1 ]$ small enough so that 
\begin{equation}\label{eq:combaux1}
C_{\textup{uni}}\varepsilon^{1/2}
<\frac{c_{\textup{str}}}{3} \Big(\frac{\overline{\delta}(A)}{2}\Big)^{2^n}.
\end{equation}
Next choose $J \in \mathbb{N}$ large enough so that
\begin{equation}\label{eq:combaux2}
J^{-1}C_{\textup{err}}\varepsilon^{-3n}\log\Big(\frac{1}{\varepsilon}\Big)  
< \frac{c_{\textup{str}}}{3} \Big(\frac{\overline{\delta}(A)}{2}\Big)^{2^n}.
\end{equation}
Finally, take $R\geq1$ such that 
\[
\sup_{x\in\mathbb{R}^2} \frac{\vert A\cap(x+[0,R]^2)\vert }{R^2}>\frac{\overline{\delta}(A)}{2}
\] 
and also make sure that it is large enough, i.e., $R>2\lambda_J$.
It follows that there exists some $x\in \mathbb{R}^2$ such that the set 
\[ B:=(A-x)\cap[0,R]^2 \] 
satisfies 
\begin{equation}\label{eq:combaux3}
\vert B\vert  >\frac{\overline{\delta}(A)}{2} R^2.
\end{equation}
We claim that there has to exists an index $j\in\{1,\ldots,J\}$ such that 
\begin{equation}\label{eq:pigeon_error}    
\big\vert \mathcal{N}^{\varepsilon}_{\lambda_j}(\mathbbm{1}_B)-\mathcal{N}^{1}_{\lambda_j}(\mathbbm{1}_B)\big\vert  \leq J^{-1}C_{\textup{err}}\varepsilon^{-3n}\log\Big(\frac{1}{\varepsilon}\Big) R^{2}.
\end{equation}
If that was not the case, then simply by summing over all $j$, we would obtain a contradiction with \eqref{eq:error}. Combining the bounds \eqref{eq:structured}, \eqref{eq:uniform}, \eqref{eq:combaux1}, \eqref{eq:combaux2}, \eqref{eq:combaux3}, and \eqref{eq:pigeon_error}, and applying the decomposition \eqref{eq:decomposition} with $\lambda=\lambda_j$, we obtain
\[
\mathcal{N}_{\lambda}^0(\mathbbm{1}_B)>\frac{c_{\textup{str}}}{3}\Big(\frac{\overline{\delta}(A)}{2}\Big)^{2^n}R^2>0 .
\]
Therefore, by Remark \ref{rem:nondegenerate}, the set $B$ contains an isometric copy of $\lambda_j \cdot \Gamma_n$.
Since $B$ is obtained simply as a subset of a translate of $A$, it follows that $A$ also contains an isometric copy of $\lambda_j \cdot \Gamma_n$, but this contradicts our choice of the sequence $(\lambda_j)_{j\in\mathbb{N}}$.

To prove Theorem \ref{thm:interval}, consider the intervals $I_j = [2^{-2j},2^{-2j+1})$, where $j\in \mathbb{N}$, and suppose that for each of these intervals we choose some $\lambda_j\in I_j$ such that $A$ does not contain an isometric copy of $\lambda_j\cdot \Gamma_n$. Notice that $\lambda_{j+1}\leq \lambda_j/2$ for all $j\in \mathbb{N}$. We choose $\varepsilon >0$ such that 
\[
C_{\textup{uni}}\varepsilon^{\frac{1}{2}}<\frac{c_{\textup{str}}}{3}\delta^{2^n}
\]
and choose $J$ large enough so that 
\[
J^{-1}C_{\textup{err}}\varepsilon^{-3n}\log\Big(\frac{1}{\varepsilon}\Big) < \frac{c_{\textup{str}}}{3}\delta^{2^n} .
\]
Clearly, such $J$ can be chosen somewhat sparingly, to satisfy
\[ J \leq C \delta^{-(3n+1)2^{n+1}} \]
for a constant $C$ depending on $c_{\textup{str}},C_{\textup{err}},C_{\textup{uni}}$ and thus actually only depending on $n$.
Because at this point we are considering only finitely many scales $\lambda_j$, $1\leq j\leq J$, simply by relabeling them, we can view them as an increasing finite sequence which allows us to reuse the pigeonhole argument from the proof of Theorem \ref{thm:large}, that is, we conclude that for some $j\in \{1,\ldots, J\}$ we have a bound which formally looks identical to \eqref{eq:pigeon_error}, only $B$ is replaced with $A$. 
From the decomposition \eqref{eq:decomposition} we conclude $\mathcal{N}_{\lambda_j}^0(\mathbbm{1}_A)>0$ for some index $j$. By Remark \ref{rem:nondegenerate}, there exists an isometric copy of $\lambda_j\cdot\Gamma_n$ in the set $A$, which contradicts our choices of the numbers $(\lambda_j)_{j\in\mathbb{N}}$.
Thus, there exists an index $1\leq j\leq J$ such that $A$ contains an isometric copy of $\lambda\cdot\Gamma_n$ for every scale $\lambda\in I_j$; set
\[ I(A, n):=I_j \] 
for one such value of $j$.
An easy calculation shows that the length of this interval, 
\[ \vert I(A, n)\vert  = 2^{-2j} \geq 2^{-2J}, \]
satisfies the requirement from the statement of Theorem \ref{thm:interval}. 



\section*{Declarations}
B.\,P. is supported by the \emph{Croatian Science Foundation}.
The research leading to these results received funding from the \emph{Croatian Science Foundation} under project UIP-2017-05-4129 (MUNHANAP).
{ The authors are grateful to Zoran Vondra\v{c}ek and the anonymous referee for several useful comments, which have improved the presentation.}
There is no conflict of interests.
The authors contributed equally to this work.


\bibliography{cubicalgraphs}{}
\bibliographystyle{plainurl}

\end{document}